\newcommand{\N}{\mathbb{N}}
\newcommand{\Z}{\mathbb{Z}}
\newcommand{\Q}{\mathbb{Q}}
\newcommand{\R}{\mathbb{R}}
\newcommand{\C}{\mathbb{C}}
\newcommand{\M}{\mathscr{M}}
\newcommand{\U}{\mathscr{U}}
\newcommand{\V}{\mathscr{V}}
\newtheorem{theorem}{Theorem}[section]
\newtheorem{definition}[theorem]{Definition}
\newtheorem{lemma}[theorem]{Lemma}
\newtheorem{proposition}[theorem]{Proposition}
\newtheorem{corollary}[theorem]{Corollary}
\newtheorem{remark}[theorem]{Remark}
\def\set#1#2{\left\{\ #1 \ \middle|\ #2 \ \right\}}
\def\m#1{\mbox{#1}}
\def\ms#1{\mbox{\scriptsize #1}}
\def\mt#1{\mbox{\tiny #1}}
\title{Set Theory and $p$-adic Algebras}
\author{University of Tokyo, Tomoki Mihara}
\date{}
\begin{document}

\maketitle
\tableofcontents

\section{Introduction}
\label{Introduction}

In our previous paper, \cite{Mih}, we studied the property of the universal totally disconnected Hausdorff compactification $\iota_{\ms{TD},X} \colon X \to \m{TD}(X)$ of a topological space $X$ using the Berkovich spectrum of the Banach $k$-algebra of $k$-valued bounded continuous functions on $X$ for an arbitrary non-Archimedean field $k$. As a consequence, we have obtained the method to analyse the universal totally disconnected Hausdorff compactification $\m{TD}(X)$ and the totally disconnected boundary $\partial_{\ms{TD}} X \coloneqq \m{TD}(X) \backslash \iota_{\ms{TD},X}(X)$ applying the elementary $p$-adic analysis to them. The aim of this paper is to verify it is independent of the axiom of $\m{ZFC}$ that the $k$-algebra $\m{C}(\partial_{\ms{TD}} X,k)$ of $k$-valued continuous functions on the totally disconnected boundary $\partial_{\ms{TD}} X$ has a maximal ideal of height $0$ for any separated totally disconnected non-compact Hausdorff topological space $X$ countable at infinity and any local field $k$. This is Theorem \ref{main theorem}. The proof of the independence depends on Shelah's theory on a proper forcing, \cite{She}.

Note that our theory is an analogue of the result for the Stone-$\check{\m{C}}$ech compactification $\iota_{\beta,X} \colon X \to \beta X$ of a topological space $X$. Under the continuum hypothesis, the maximal boundary $\partial_{\beta} X \colon \beta X \backslash \iota_{\beta,X}(X)$ of $X$ has a P-point if $X$ is a separable non-compact Hausdorff topological space countable at infinity. The proof heavily relies on the real analysis and the lifting property for an $\R$-valued bounded continuous function on $X$, and hence a similar method does not work for another compactification of $X$ than the Stone-$\check{\m{C}}$ech compactification $\iota_{\beta,X} \colon X \to \beta X$, such as the universal totally disconnected Hausdorff compactification $\iota_{\ms{TD},X} \colon X \to \m{TD}(X)$. The key idea of our proof is the alteration of the base field. The universal totally disconnected Hausdorff compactification $\iota_{\ms{TD},X} \colon X \to \m{TD}(X)$ satisfies the lifting property for a $k$-valued bounded continuous function on $X$ when the base field $k$ is a finite field or a local field, and hence the $p$-adic analysis is valid here instead of the real analysis.

\section{P-point}
\label{P-point}

Throughout this paper, we adopt the axiom of Zermelo--Fraenkel set theory with the axiom of choice, and hence Zorn's lemma and the well-ordering principle hold. We write $\m{ZFC}$ instead of Zermelo--Fraenkel set theory with the axiom of choice for short. We review the topological foundation first. A P-point of a topological space corresponds to a maximal ideal of height $0$ in the Banach algebra of bounded continuous functions over a non-Archimedean field, and hence the existence of a P-point is deeply related on the ring structure of such a ring. Throughout this paper, denote by $X$ a topological space and by $k$ a complete valuation field.

\begin{definition}
Denote by $\m{CO}(X) \subset 2^X$ the set of clopen subsets of $X$, and by $\m{CO}_{\delta}(X) \subset 2^X$ the set of countable intersections of clopen subsets of $X$. A subset $F \subset X$ is said to be a $\m{CO}_{\delta}$-subset of $X$ if $F \in \m{CO}_{\delta}(X)$. For a point $x \in X$, denote by $N_{X,x} \subset 2^X$ the subset of neighbourhoods of $x$ in $X$.
\end{definition}

\begin{lemma}
\label{zero set}
Suppose the valuation of $k$ is non-trivial. A subset $F \subset X$ is a $\m{CO}_{\delta}$-subset of $X$ if and only if there is a $k$-valued bounded continuous function $f \colon X \to k$ such that $F = f^{-1}(\{ 0 \})$.
\end{lemma}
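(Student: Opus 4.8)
The plan is to establish the two implications separately; the hypothesis that the valuation of $k$ be non-trivial will enter only through the existence of an element $c \in k^{\times}$ with $0 < |c| < 1$, equivalently a null sequence $(|c|^{n})_{n \in \N}$ of nonzero absolute values.

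For the implication that a zero set is a $\m{CO}_{\delta}$-subset, I would start from a bounded continuous $f \colon X \to k$ with $F = f^{-1}(\{ 0 \})$ and use that in a non-Archimedean field each closed ball $B_{n} \coloneqq \{ y \in k \mid |y| \le |c|^{n} \}$ is clopen: it is closed since $|\cdot| \colon k \to \R$ is continuous, and open since $|c|^{n} > 0$ makes the open ball of radius $|c|^{n}$ around any of its points a neighbourhood contained in $B_{n}$, by the ultrametric inequality. As $|c|^{n} \to 0$ one has $\bigcap_{n \in \N} B_{n} = \{ 0 \}$, so $F = \bigcap_{n \in \N} f^{-1}(B_{n})$ presents $F$ as a countable intersection of clopen subsets of $X$. (Boundedness of $f$ is not used in this direction.)

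For the converse, given $F = \bigcap_{n \in \N} U_{n}$ with $U_{n} \in \m{CO}(X)$ I would first replace $U_{n}$ by $\bigcap_{i \le n} U_{i}$ so as to assume $U_{0} \supseteq U_{1} \supseteq \cdots$, then set $V_{n} \coloneqq U_{n} \backslash U_{n+1} \in \m{CO}(X)$, so that $X$ is the disjoint union of the clopen sets $X \backslash U_{0}, V_{0}, V_{1}, \dots$ and of $F$, and define
\[
 f \coloneqq \mathbf{1}_{X \backslash U_{0}} + \sum_{n \in \N} c^{\,n+1} \mathbf{1}_{V_{n}},
\]
where $\mathbf{1}_{S}$ denotes the $k$-valued indicator function of $S \subset X$. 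Each finite partial sum is a $k$-linear combination of indicator functions of clopen sets, hence locally constant and continuous, and the partial sums converge to $f$ uniformly since the tail $\sum_{n > N} c^{\,n+1} \mathbf{1}_{V_{n}}$ has sup-norm at most $|c|^{N+2} \to 0$; hence $f$ is continuous, and $\| f \|_{\sup} \le 1$, so $f$ is bounded. It then remains to verify $f^{-1}(\{ 0 \}) = F$: a point of $F$ lies in none of $X \backslash U_{0}, V_{0}, V_{1}, \dots$, so $f$ vanishes on it; and a point $x \notin F$ lies either in $X \backslash U_{0}$, where $f(x) = 1$, or, when $x \in U_{0}$, in $V_{m}$ for the largest $m$ with $x \in U_{m}$ (which exists since then $x \in U_{0} \backslash \bigcap_{n \in \N} U_{n}$ and the chain is decreasing), where $f(x) = c^{\,m+1} \ne 0$.

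I expect the argument to be routine overall; the two points that genuinely need care are the continuity of $f$ at the points of $F$ --- supplied precisely by the uniform convergence of the defining series, the naive pointwise formula giving no control there --- and the fact that the zero set of $f$ equals $F$ and not something larger, which is the reason for the term $\mathbf{1}_{X \backslash U_{0}}$. It is also worth remarking why non-triviality of the valuation is indispensable: were $|\cdot|$ trivial, then $k$ is discrete, so every zero set $f^{-1}(\{ 0 \})$ is automatically clopen, whereas $\m{CO}_{\delta}(X)$ is in general strictly larger than $\m{CO}(X)$.
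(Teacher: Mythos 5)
Your proof is correct and follows essentially the same route as the paper's: clopen preimages of a shrinking sequence of balls for the forward direction, and a uniformly convergent geometric series of indicator functions of clopen sets for the converse. The only cosmetic difference is that you give the series disjoint supports $V_n = U_n \backslash U_{n+1}$, whereas the paper uses the overlapping supports $X \backslash U_n$ and evaluates the resulting sum by the ultrametric inequality; the two functions differ by the unit $(1-c)^{-1}$ on each piece.
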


\begin{proof}
Suppose there is a $k$-valued bounded continuous function $f \colon X \to k$ such that $F = f^{-1}(\{ 0 \})$. For a non-negative integer $n \in \N$, set
\begin{eqnarray*}
  U_n \coloneqq \set{x \in X}{|f(x)| < (n + 1)^{-1}}.
\end{eqnarray*}
By the continuity of $f$, $U_n$ is open in $X$. Take a point $x \in X \backslash U_n$. Since $f$ is continuous, there is an open neighbourhood $V \subset X$ of $x$ in $X$ such that $|f(y) - f(x)| < (n + 1)^{-1}$. Since $x \in X \backslash U_n$, one has $|f(x)| \geq (n + 1)^{-1}$ and hence $|f(y)| = |(f(y) - f(x)) + f(x)| = |f(x)|$ for any $y \in V$. It follows $V \subset X \backslash U_n$, and therefore $X \backslash U_n$ is open. One has obtained a decreasing sequence $U_0 \subset U_1 \subset \cdots \subset U_n \subset \cdots$ of clopen subsets of $X$, and it is obvious that $\bigcap_{n \in \N} U_n = f^{-1}(\{ 0 \}) = F$.

On the other hand, suppose $F$ is a $\m{CO}_{\delta}$-subset of $X$. Take a countable collection $\U \subset \m{CO}(X)$ of clopen subsets of $X$ with $\bigcap \U = F$. If $|\U| < \aleph_0$, then $F$ is a clopen subset of $X$ and hence the $k$-valued characteristic function $1_{X \backslash F} \colon X \to k$ on the complement $X \backslash F \in \m{CO}(X)$ is a $k$-valued bounded continuous function with $1_{X \backslash F}^{-1}(\{ 0 \}) = F$. Otherwise, one has $|\U| = \aleph_0$ by the assumption of the countability of $\U$, and take a bijective map $\phi \colon \N \to \U$. For a non-negative integer $n \in \N$, set $U_n \coloneqq \phi(0) \cap \cdots \cap \phi(n) \in \m{CO}(X)$. It is obvious that $U_0 \subset U_1 \subset \cdots \subset U_n \subset \cdots \subset F$ and $\bigcap_{n \in \N} U_n = F$. Since the valuation of $k$ is not trivial, there is an element $a \in k^{\times}$ such that $0 < |a| < 1$. Define the function $f \colon X \to k$ as the infinite sum
\begin{eqnarray*}
  f \coloneqq \sum_{n \in \N} a^n 1_{X \backslash U_n} \colon X & \to & k \\
  x & \mapsto & \sum_{n \in \N} a^n 1_{X \backslash U_n}(x),
\end{eqnarray*}
which is a $k$-valued bounded continuous function on $X$ because it is locally constant and one has
\begin{eqnarray*}
  && |f(x)| = \left| \sum_{n \in \N} a^n 1_{X \backslash U_n}(x) \right| = \left| \sum_
  {
    \ms{
    $
    \begin{array}{c}
      n \in \N \\
      x \notin U_n
    \end{array}
    $
    }
  }
  a^n \right| =
  \left\{
    \begin{array}{ll}
      |a|^n & (x \in U_{n - 1} \backslash U_n, {}^{\exists} n \in \N) \\
      0 & (x \in \bigcap_{n \in \N} U_n = F),
    \end{array}
  \right.
\end{eqnarray*}
for any $x \in X$, where we formally set $U_{-1} \coloneqq X$. Thus one obtains a $k$-valued bounded continuous function $f \colon X \to k$ with $f^{-1}(\{ 0 \}) = F$.
\end{proof}

\begin{definition}
A point $x \in X$ is said to be a P-point of $X$ if for any countable family $\U \subset N_{X,x}$ of neighbourhoods of $x$ in $X$, the intersection of $\bigcap \U \subset X$ is contained in $N_{X,x}$. In other words, $x \in X$ is a P-point if and only if $N_{X,x}$ is closed under countable intersections.
\end{definition}

\begin{lemma}
\label{zero dimensional}
Suppose $X$ is a $T_1$ topological space. If the set $\m{CO}(X)$ of clopen subsets forms a basis of $X$, then $X$ is totally disconnected. Conversely if $X$ is a totally disconnected locally compact Hausdorff topological space, then $\m{CO}(X)$ forms a basis of $X$.
\end{lemma}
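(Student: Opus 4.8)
The plan is to prove the two implications separately; the first is short, and the second is where the real work lies.

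For the first implication, assume $\m{CO}(X)$ is a basis of the $T_1$ space $X$, and let $C \subseteq X$ be a connected subset containing two distinct points $x$ and $y$. Since $X$ is $T_1$, the singleton $\{ y \}$ is closed, so $X \backslash \{ y \}$ is an open neighbourhood of $x$; the basis hypothesis then yields a clopen set $U$ with $x \in U \subseteq X \backslash \{ y \}$. Now $C \cap U$ and $C \backslash U$ are disjoint, relatively open, and nonempty (they contain $x$ and $y$ respectively), contradicting the connectedness of $C$. Hence every connected component of $X$ is a singleton, i.e.\ $X$ is totally disconnected. (Only the $T_0$ consequence of $T_1$, namely that two distinct points are separated by a neighbourhood, is actually used.)

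For the converse, assume $X$ is totally disconnected, locally compact, and Hausdorff, fix $x \in X$, and fix an open neighbourhood $V$ of $x$; I must produce a clopen $U$ with $x \in U \subseteq V$. First I would use local compactness together with the Hausdorff (regularity) property to choose an open $W$ with $x \in W \subseteq \overline{W} \subseteq V$ and $K \coloneqq \overline{W}$ compact. Since compact subsets of a Hausdorff space are closed, it suffices to produce a set $A \subseteq \mathrm{int}(K)$ with $x \in A$ that is clopen in the subspace $K$: such an $A$ is then open in $X$ (open in the open set $\mathrm{int}(K)$) and closed in $X$ (closed in the closed set $K$), hence clopen in $X$ with $x \in A \subseteq K \subseteq V$. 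To build $A$, I would invoke the classical fact that in a compact Hausdorff space the connected component of any point equals its quasi-component, the intersection of all clopen subsets containing it. Since $X$ is totally disconnected and connectedness is intrinsic, the component of $x$ in $K$ is $\{ x \}$, so the intersection of all clopen-in-$K$ sets through $x$ is $\{ x \}$. The boundary $K \backslash \mathrm{int}(K)$ is closed in $K$, hence compact, and misses $x$; so for each of its points $y$ there is a clopen $A_y \subseteq K$ with $x \in A_y$ and $y \notin A_y$, the open sets $K \backslash A_y$ cover $K \backslash \mathrm{int}(K)$, and a finite subcover gives $A_{y_1}, \dots, A_{y_n}$ whose intersection $A$ is clopen in $K$, contains $x$, and is disjoint from $K \backslash \mathrm{int}(K)$, i.e.\ contained in $\mathrm{int}(K)$. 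This is the required set.

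The main obstacle is this last direction, and concretely the appeal to ``components $=$ quasi-components in compact Hausdorff spaces''; I would either cite a standard reference or insert its short proof, which is itself a compactness argument: if the quasi-component $Q$ of $x$ in $K$ decomposed as a disjoint union of two nonempty closed sets $F_1 \ni x$ and $F_2$, normality of $K$ separates them by disjoint opens $U_1 \supseteq F_1$ and $U_2 \supseteq F_2$; covering the compact set $K \backslash (U_1 \cup U_2)$ by complements of clopen sets through $x$ and taking a finite intersection $A'$, one checks that $A' \cap U_1 = A' \backslash U_2$ is clopen in $K$ and contains $x$, whence $Q \subseteq U_1$ and $F_2 = \emptyset$, so $Q$ is connected. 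Apart from this, the argument is just careful bookkeeping of which sets are open or closed relative to $K$ versus relative to $X$.
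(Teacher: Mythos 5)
Your proposal is correct and follows essentially the same route as the paper: both directions match, including the key step of showing that in the compact Hausdorff neighbourhood the intersection of clopen sets through $x$ is connected (quasi-component equals component) via the normality-plus-finite-subcover argument, and then using compactness of the boundary to extract a single clopen set inside the open neighbourhood. The only differences are cosmetic bookkeeping (your $K=\overline{W}$ and $\mathrm{int}(K)$ versus the paper's $V$ and $\overline{V}$).
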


\begin{proof}
Suppose $\m{CO}(X)$ forms a basis of $X$. Take a connected component $C \subset X$ of $X$, and assume $C$ contains two distinct points $x,y \in C$. Since $X$ is a $T_1$ topological space, the complement $X \backslash \{ x \} \subset X$ is an open neighbourhood of $y$. There are a clopen subset $U \in \m{CO}(X)$ such that $y \in U \subset X \backslash \{ x \}$ because $\m{CO}(X)$ forms a basis of $X$. Now one obtains a decomposition $C = (C \cap U) \sqcup (C \cap (X \backslash U))$ of $C$ by clopen subsets, and each component is non-empty because it contains either $x$ or $y$. It contradicts the fact that $C$ is connected, and hence $C$ consists of a single point. Thus $X$ is totally disconnected.

On the other hand, suppose $X$ is a totally disconnected locally compact Hausdorff topological space. Take a point $x \in X$ and an open neighbourhood $U \subset X$ of $x$. To begin with, we construct an open neighbourhood $V \subset X$ of $x$ such that $\overline{V} \subset X$ is compact and $\overline{V} \subset U$. Since $X$ is locally compact, there is an open neighbourhood $W \subset X$ of $x$ such that $\overline{W}$ is compact. The closed subset $\overline{W} \backslash U \subset X$ is a compact subset disjoint from $x$, and hence since $X$ is Hausdorff, there are open subsets $O_1,O_2 \subset X$ such that $x \in O_1$, $\overline{W} \backslash U \subset O_2$, and $O_1 \cap O_2 = \emptyset$. In particular one has
\begin{eqnarray*}
  \overline{O_1 \cap W \cap U} \subset \overline{O_1} \cap \overline{W} \subset (X \backslash O_2) \cap \overline{W} = \left( X \backslash \left( \overline{W} \backslash U \right) \right) \cap \overline{W} \subset U \cap \overline{W},
\end{eqnarray*}
and hence $V \coloneqq O_1 \cap W \cap U \subset X$ is an open neighbourhood of $x$ such that $\overline{V} \subset X$ is compact and $\overline{V} \subset U$. Now set $F \coloneqq \bigcap (\m{CO}(\overline{V}) \cap N_{\overline{V},x}) \subset \overline{V}$. The subset $F \subset \overline{V}$ is an intersection of closed subsets containing $x$, and hence is a closed subset of $\overline{V}$ containing $x$. We prove $F$ is the singleton $\{ x \}$. Take closed subsets $G,H \subset \overline{V}$ with $F = G \sqcup H$, and we may and do assume $x \in G$ without loss of generality. Since $\overline{V}$ is a compact Hausdorff topological space, there are open subsets $W_1,W_2 \subset \overline{V}$ such that $G \subset W_1$, $H \subset W_2$, and $W_1 \cap W_2 = \emptyset$. The collection $\{ W_1 \sqcup  W_2 \} \cup \{ \overline{V} \backslash O \mid O \in \m{CO}(\overline{V} \cap N_{\overline{V},x}) \}$ is an open covering of the compact topological space $\overline{V}$, and hence there is a clopen subset $O \in \m{CO}(\overline{V}) \cap N_{\overline{V},x}$ such that $W_1 \cup W_2 \cup (\overline{V} \backslash O) = \overline{V}$, i.e.\ $O \subset W_1 \cup W_2$. Note that $W_1$ and $W_2$ are disjoint open subsets of the closed subspace $\overline{V} \subset X$, and hence $\overline{W_1} \cap W_2 = \emptyset$. One has
\begin{eqnarray*}
  && \left( \overline{W}_1 \cap O \right) \cap \left( \overline{V} \backslash W_1 \right) = \overline{W}_1 \cap O \cap \left( \left( \overline{V} \backslash (W_1 \cup W_2) \right) \cup (W_2 \backslash W_1) \right) \\
  & \subset & \overline{W}_1 \cap O \cap \left( \left( \overline{V} \backslash O \right) \cup W_2 \right) = \emptyset \cup \emptyset = \emptyset,
\end{eqnarray*}
and hence $\overline{W}_1 \cap O \subset W_1$. It follows $W_1 \cap O = \overline{W_1} \cap O$. Since $O \subset \overline{V}$ is a clopen subset, so is $W_1 \cap O = \overline{W}_1 \cap O$. One obtains $x \in G \subset W_1 \cap O \in \m{CO}(\overline{V}) \cap N_{\overline{V},x}$, and it implies
\begin{eqnarray*}
  H = F \cap H \cap F \subset (W_1 \cap O) \cap W_2 \cap O = \emptyset.
\end{eqnarray*}
Therefore $F$ is connected, and thus coincides with the singleton $\{ x \}$ because $X$ is totally disconnected. It follows that the collection $\{ \overline{V} \backslash O' \mid O' \in \m{CO}(\overline{V}) \cap N_{\overline{V},x} \}$ is a clopen covering of the closed subspace $\overline{V} \backslash V$ in the compact subspace $\overline{V} \subset X$, and hence there is a clopen subset $O' \in \m{CO}(\overline{V})$ of $\overline{V}$ such that $O' \subset V$. Since $V \subset X$ is an open subset, $O'$ is open in $X$. Moreover $O'$ is a closed subspace of the compact subspace $\overline{V} \subset X$, and hence $O'$ is compact. Since $X$ is Hausdorff, the compact subspace $O' \subset X$ is closed, and thus $O'$ is a clopen neighbourhood of $x$ in $X$ contained in $V$ and hence in $U$. We conclude that the set $\m{CO}(X)$ of clopen subsets forms a basis of $X$.
\end{proof}

\begin{proposition}
\label{P-point and zero set}
Suppose $X$ is a totally disconnected locally compact Hausdorff topological space and the valuation of $k$ is not trivial. Then a point $x \in X$ is a P-point of $X$ if and only if the zero locus $f^{-1}(\{ 0 \}) \subset X$ is a neighbourhood of $x$ in $X$ for any $k$-valued bounded continuous function $f \colon X \to k$ with $f(x) = 0$.
\end{proposition}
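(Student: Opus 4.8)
\emph{Proof proposal.} The plan is to deduce both implications directly from Lemmas \ref{zero set} and \ref{zero dimensional}, with no extra set-theoretic input; the entire content is the translation between $\m{CO}_{\delta}$-subsets and zero loci of $k$-valued bounded continuous functions.

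For the ``only if'' direction, I would suppose $x$ is a P-point and take a $k$-valued bounded continuous function $f \colon X \to k$ with $f(x) = 0$. Since the valuation of $k$ is non-trivial, Lemma \ref{zero set} shows the zero locus $F \coloneqq f^{-1}(\{ 0 \})$ is a $\m{CO}_{\delta}$-subset, say $F = \bigcap_{n \in \N} U_n$ with each $U_n \in \m{CO}(X)$. As $x \in F \subset U_n$ and $U_n$ is open, each $U_n$ lies in $N_{X,x}$, so $\{ U_n \}_{n \in \N}$ is a countable family of neighbourhoods of $x$; the P-point property then gives $F = \bigcap_{n \in \N} U_n \in N_{X,x}$, i.e.\ $F$ is a neighbourhood of $x$.

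For the ``if'' direction, I would take a countable family $\U \subset N_{X,x}$ and show $\bigcap \U \in N_{X,x}$. For each $U \in \U$ the interior of $U$ is an open neighbourhood of $x$, so by Lemma \ref{zero dimensional} (using that $X$ is totally disconnected locally compact Hausdorff, so $\m{CO}(X)$ is a basis) there is a clopen subset $V_U \in \m{CO}(X)$ with $x \in V_U \subset U$. The intersection $W \coloneqq \bigcap_{U \in \U} V_U$ is then a $\m{CO}_{\delta}$-subset containing $x$, so Lemma \ref{zero set} produces a $k$-valued bounded continuous function $f \colon X \to k$ with $f^{-1}(\{ 0 \}) = W$, and automatically $f(x) = 0$. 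Applying the hypothesis to this $f$ shows $W$ is a neighbourhood of $x$; since $W \subset \bigcap_{U \in \U} V_U \subset \bigcap \U$, the set $\bigcap \U$ is a neighbourhood of $x$ as well. Hence $N_{X,x}$ is closed under countable intersections, i.e.\ $x$ is a P-point.

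I do not expect a genuine obstacle here beyond bookkeeping. The only two points needing a line of care are: first, that a clopen refinement $V_U \subset U$ exists even when the neighbourhood $U$ is not itself open, which is handled by passing to the interior and invoking the basis property of $\m{CO}(X)$; and second, that the case of a finite family $\U$ is already subsumed, since a finite intersection of clopen sets is trivially a $\m{CO}_{\delta}$-subset and Lemma \ref{zero set} covers it. The write-up should accordingly be short.
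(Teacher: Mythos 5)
Your proposal is correct and follows essentially the same route as the paper: the forward direction extracts a countable clopen presentation of the zero locus (the paper writes the sets $U_n = \{\,x : |f(x)| < (n+1)^{-1}\,\}$ explicitly, you invoke Lemma \ref{zero set} abstractly, which yields the same neighbourhoods since each $U_n$ contains $F \ni x$), and the backward direction refines the countable family to clopen neighbourhoods via Lemma \ref{zero dimensional} and realises their intersection as a zero locus via Lemma \ref{zero set}. No gaps.
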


\begin{proof}
Suppose $x$ is a P-point of $X$. Take a $k$-valued bounded continuous function $f \colon X \to k$ with $f(x) = 0$. For a non-negative integer $n \in \N$, set
\begin{eqnarray*}
  U_n \coloneqq \set{x \in X}{|f(x)| < (n + 1)^{-1}}.
\end{eqnarray*}
By the argument in the proof of Lemma \ref{zero set}, $U_n$ is a clopen neighbourhood of $x$ in $X$ for any $n \in \N$ and $\bigcap_{n \in \N} U_n = f^{-1}(\{ 0 \})$. Since $x$ is a P-point of $X$, the countable intersection $f^{-1}(\{ 0 \}) \in \m{CO}_{\delta}(X)$ is a neighbourhood of $x$ in $X$.

On the other hand, suppose the zero locus $f^{-1}(\{ 0 \}) \subset X$ is a neighbourhood of $x$ in $X$ for any $k$-valued bounded continuous function $f \colon X \to k$ with $f(x) = 0$. Take a countable family $\U \subset N_{X,x}$, and we prove that $\bigcap \U \in N_{X,x}$. For a neighbourhood $U \in \U$ of $x$, by Lemma \ref{zero dimensional}, there is a clopen neighbourhood $U' \in \m{CO}(X) \cap N_{X,x}$ of $x$ contained in $U$. Therefore there is a countable refinement $\U' \subset \m{CO}(X) \cap N_{X,x}$ of $\U$ consisting of clopen neighbourhoods of $x$. One has $x \in \bigcap \U' \subset \bigcap \U$ and $\bigcap \U' \in \m{CO}_{\delta}(X)$. By Lemma \ref{zero set}, there is a $k$-valued bounded continuous function $f \colon X \to k$ such that $\bigcap \U' = f^{-1}(\{ 0 \})$, and hence by the assumption, $\bigcap \U' \subset X$ is a neighbourhood of $x$. Thus $\bigcap \U \in N_{X,x}$.
\end{proof}

We have reviewed the notion of a P-point, and studied the relation with a P-point of a zero dimensional topological space and the $k$-valued bounded continuous functions. Such a relation is easily translated by the language of a ring. Here we introduce specific two ideals of the $k$-algebra of $k$-valued bounded continuous functions.

\begin{definition}
Denote by $\m{C}(X,k)$ the commutative unital $k$-algebra of $k$-valued continuous functions on $X$ and by $\m{C}_{\ms{bd}}(X,k) \subset \m{C}(X,k)$ the $k$-subalgebra of $k$-valued bounded continuous functions on $X$.
\end{definition}

\begin{definition}
For a point $x \in X$, set
\begin{eqnarray*}
  m_{k,x} & \coloneqq & \set{f \in \m{C}_{\ms{bd}}(X,k)}{|f(x)| = 0} \\
  I_{k,x} & \coloneqq & \set{f \in \m{C}_{\ms{bd}}(X,k)}{|f(x)| = 0, f^{-1}(\{ 0 \}) \in N_{X,x}}.
\end{eqnarray*}
They are ideals of $\m{C}_{\ms{bd}}(X,k)$ and it is obvious that $I_{k,x} \subset m_{k,x}$. The evaluation map
\begin{eqnarray*}
  \m{C}_{\ms{bd}}(X,k) & \to & k \\
  f & \mapsto & f(x)
\end{eqnarray*}
induces the isomorphism $\m{C}_{\ms{bd}}(X,k)/m_{k,x} \cong_k k$ of $k$-algebras.
\end{definition}

\begin{proposition}
\label{criterion for a P-point}
Suppose $X$ is a totally disconnected locally compact Hausdorff topological space and the valuation of $k$ is not trivial. Then for a point $x \in  X$, the following are equivalent:
\begin{itemize}
\item[(i)] The point $x \in X$ is a P-point of $X$;
\item[(ii)] The equality $I_{k,x} = m_{k,x}$ holds; and
\item[(iii)] The maximal ideal $m_{k,x} \subset \m{C}_{\ms{bd}}(X,k)$ is of height $0$.
\end{itemize}
\end{proposition}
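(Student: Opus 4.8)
The plan is to get the equivalence of (i) and (ii) straight from Proposition~\ref{P-point and zero set}, and the equivalence of (ii) and (iii) from a single bridge observation: that $f \in \m{C}_{\ms{bd}}(X,k)$ lies in $I_{k,x}$ if and only if $gf = 0$ for some $g \in \m{C}_{\ms{bd}}(X,k)$ with $g(x) \neq 0$. For the first equivalence, one first records that $m_{k,x} = \set{f \in \m{C}_{\ms{bd}}(X,k)}{f(x) = 0}$ (as the valuation is a valuation) and that $I_{k,x} \subset m_{k,x}$ always holds, so $I_{k,x} = m_{k,x}$ amounts exactly to the assertion that $f^{-1}(\{ 0 \}) \in N_{X,x}$ for every $f \in \m{C}_{\ms{bd}}(X,k)$ with $f(x) = 0$; since $X$ is a totally disconnected locally compact Hausdorff topological space and the valuation of $k$ is non-trivial, Proposition~\ref{P-point and zero set} identifies this with $x$ being a P-point of $X$, yielding (i) $\Leftrightarrow$ (ii).

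For the bridge observation, the ``if'' direction is immediate: $g(x) \neq 0$ and $gf = 0$ force $f(x) = 0$, and $f$ vanishes on the open neighbourhood $\set{y \in X}{g(y) \neq 0}$ of $x$, so $f^{-1}(\{ 0 \}) \in N_{X,x}$. For the ``only if'' direction, $f \in I_{k,x}$ makes $f^{-1}(\{ 0 \})$ a neighbourhood of $x$, so Lemma~\ref{zero dimensional} furnishes a clopen $U \in \m{CO}(X) \cap N_{X,x}$ with $U \subset f^{-1}(\{ 0 \})$, whence $g \coloneqq 1_U$ satisfies $g(x) = 1 \neq 0$ and $gf = 0$. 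Granting this, and using that $\m{C}_{\ms{bd}}(X,k)$ is reduced (it has no nonzero nilpotent, as $k$ is a field) and that $m_{k,x}$ is maximal, hence prime, I would argue both directions of (ii) $\Leftrightarrow$ (iii). If (ii) holds: for any prime $\mathfrak{q} \subset m_{k,x}$ and any $f \in m_{k,x} = I_{k,x}$ there is $g \notin m_{k,x} \supset \mathfrak{q}$ with $gf = 0 \in \mathfrak{q}$, so $f \in \mathfrak{q}$; thus $\mathfrak{q} = m_{k,x}$ and $m_{k,x}$ has height $0$. If (ii) fails: pick $f \in m_{k,x} \backslash I_{k,x}$; by the bridge observation and reducedness the multiplicatively closed set $\set{g f^n}{g \in \m{C}_{\ms{bd}}(X,k) \backslash m_{k,x},\, n \in \N}$ omits $0$, hence is disjoint from some prime ideal $\mathfrak{q}$, which then satisfies $\mathfrak{q} \subset m_{k,x}$ and $f \notin \mathfrak{q}$, so $\mathfrak{q} \subsetneq m_{k,x}$ and $m_{k,x}$ has height $\geq 1$. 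This proves (ii) $\Leftrightarrow$ (iii).

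The only step with genuine content is the bridge observation, and inside it the appeal to Lemma~\ref{zero dimensional}: it is precisely total disconnectedness together with local compactness that permits shrinking an arbitrary neighbourhood on which $f$ vanishes to a clopen one, thereby converting the topological condition defining $I_{k,x}$ into the existence of an honest ring-theoretic annihilator $1_U$. Everything downstream of that --- the description of a height-$0$ maximal ideal of a reduced ring as one each of whose elements is killed by a function not vanishing at $x$, and the existence of a prime ideal disjoint from a multiplicatively closed set avoiding $0$ --- is routine commutative algebra, which I would state tersely rather than develop.
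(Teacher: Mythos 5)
Your proposal is correct and follows essentially the same route as the paper: (i) $\Leftrightarrow$ (ii) is read off from Proposition~\ref{P-point and zero set}, and (ii) $\Leftrightarrow$ (iii) rests on exactly the paper's two devices, namely the annihilator $1_U$ obtained by shrinking $f^{-1}(\{0\})$ to a clopen neighbourhood via Lemma~\ref{zero dimensional}, and the multiplicative set $\{f^n g\}$ together with the existence of a prime disjoint from a multiplicative set avoiding $0$. The only cosmetic difference is that you run (iii) $\Rightarrow$ (ii) contrapositively (the set omits $0$, so a strictly smaller prime exists) where the paper argues directly (minimality forces the set to contain $0$); your explicit appeal to reducedness to pass from $gf^n=0$ to $gf=0$ is a point the paper leaves implicit.
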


\begin{proof}
The condition (ii) is equivalent to the condition that the zero locus $f^{-1}(\{ 0 \}) \subset X$ is a neighbourhood of $x$ in $X$ for any $k$-valued bounded continuous function $f \colon X \to k$ with $f(x) = 0$, and hence is equivalent to the condition (i) by Proposition \ref{P-point and zero set}. Therefore it suffices to verify the equivalence of the conditions (ii) and (iii).

Suppose the equality $I_{k,x} = m_{k,x}$ holds. Take a prime ideal $\wp \in \m{Spec}(\m{C}_{\ms{bd}}(X,k))$ contained in $m_{k,x}$. For an element $f \in m_{k,x} = I_{k,x}$, there is a clopen neighbourhood $U \in \m{CO}(X) \cap N_{X,x}$ of $x$ contained in the zero locus $f^{-1}(\{ 0 \}) \subset X$. The $k$-valued characteristic function $1_U \colon X \to k$ on $U$ is a $k$-valued bounded continuous function on $X$, and satisfies $1_U(x) = 1 \neq 0$. Therefore $1_U \notin m_{k,x}$, and the equality $1_U f = 0 \in \wp$ guarantees $f \in \wp$. Thus $\wp = m_{k,x}$, and hence $m_{k,x}$ is of height $0$.

On the other hand, suppose $m_{k,x}$ is of height $0$. For an element $f \in m_{k,x}$, the multiplicative set $\{ f^n g \mid n \in \N, g \in \m{C}_{\ms{bd}}(X,k) \backslash m_{k,x} \} \subset \m{C}_{\ms{bd}}(X,k)$ contains $0 \in \m{C}_{\ms{bd}}(X,k)$ because $m_{k,x}$ is a minimal prime ideal. Take a non-negative integer $n \in \N$ and an element $g \in \m{C}_{\ms{bd}}(X,k) \backslash m_{k,x}$ with $f^n g = 0$. Since $g \notin m_{k,x}$, the closed subset $g^{-1}(\{ 0 \}) \subset X$ does not contain $x$, and the complement $U \coloneqq X \backslash g^{-1}(\{ 0 \})$ is an open neighbourhood of $x$. The equality $f^n g = 0$ implies that the zero locus $f^{-1}(\{ 0 \}) \subset X$ contains the open neighbourhood $U$ of $x$, and hence $f \in I_{k,x}$. Thus $m_{k,x} = I_{k,x}$.
\end{proof}

\begin{corollary}
\label{criterion for the existence of a P-point}
Suppose $X$ is a totally disconnected compact Hausdorff topological space and the valuation of $k$ is not trivial. Then the following are equivalent:
\begin{itemize}
\item[(i)] The topological space $X$ does not have a P-point;
\item[(ii)] The $k$-algebra $\m{C}(X,k)$ does not have a maximal ideal of height $0$; and
\item[(iii)] Each minimal prime ideal of the $k$-algebra $\m{C}(X,k)$ is not closed with respect to the topology given by the supremum norm.
\end{itemize}
\end{corollary}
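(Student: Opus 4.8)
The plan is to derive all three equivalences from Proposition \ref{criterion for a P-point}, after identifying the maximal ideals and the closed prime ideals of $\m{C}(X,k)$. Since $X$ is compact, every $k$-valued continuous function on it is bounded, so $\m{C}(X,k) = \m{C}_{\ms{bd}}(X,k)$; this is a $k$-Banach algebra for the supremum norm $\| \cdot \|$, and for each $x \in X$ the ideal $m_{k,x}$ is the kernel of the continuous evaluation map $\m{C}(X,k) \to k$, $f \mapsto f(x)$, hence is a closed maximal ideal. The first substantive step is a Nullstellensatz: every proper ideal $\mathfrak{a}$ of $\m{C}(X,k)$ is contained in $m_{k,x}$ for some $x \in X$, so the maximal ideals of $\m{C}(X,k)$ are exactly the $m_{k,x}$. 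To prove this I would suppose $\mathfrak{a} \not\subseteq m_{k,x}$ for every $x$, choose for each $x$ a function $f_x \in \mathfrak{a}$ with $f_x(x) \neq 0$, and, using that $|f_x| \colon X \to \R$ is continuous and that $X$ has a clopen basis by Lemma \ref{zero dimensional}, pick a clopen neighbourhood $U_x$ of $x$ on which $|f_x|$ is bounded below by a positive constant, so that $f_x$ is invertible on $U_x$. By compactness finitely many $U_{x_1}, \dots, U_{x_n}$ cover $X$; refining to a finite clopen partition $X = V_1 \sqcup \cdots \sqcup V_m$ with each $V_j$ contained in some $U_{x_{i(j)}}$ and letting $g_j$ be $1/f_{x_{i(j)}}$ on $V_j$ and $0$ elsewhere, I obtain $\sum_j g_j f_{x_{i(j)}} = 1 \in \mathfrak{a}$, a contradiction.

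By Proposition \ref{criterion for a P-point}, $m_{k,x}$ has height $0$ if and only if $x$ is a P-point of $X$. Combined with the Nullstellensatz, this shows that $\m{C}(X,k)$ possesses a maximal ideal of height $0$ exactly when $X$ has a P-point, and negating gives the equivalence (i) $\Leftrightarrow$ (ii).

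For (ii) $\Leftrightarrow$ (iii) the key claim is that \emph{every closed prime ideal $\wp$ of $\m{C}(X,k)$ is maximal}. Being proper, $\wp \subseteq m_{k,x}$ for some $x$ by the Nullstellensatz, and I would prove the reverse inclusion. Given $f \in m_{k,x}$, the sets $U_n \coloneqq \set{y \in X}{|f(y)| < (n+1)^{-1}}$ are clopen neighbourhoods of $x$ by the computation in the proof of Lemma \ref{zero set}; hence $1_{U_n} \notin \wp$, since otherwise $1_{U_n} \in \wp \subseteq m_{k,x}$ would contradict $1_{U_n}(x) = 1$, and therefore $1_{X \backslash U_n} \in \wp$ because $\wp$ is prime and $1_{U_n} 1_{X \backslash U_n} = 0$. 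Now $f - f 1_{U_n} = f 1_{X \backslash U_n} \in \wp$ while $\| f 1_{U_n} \| \leq (n+1)^{-1}$, so $f$ lies in the closure of $\wp$; as $\wp$ is closed, $f \in \wp$, and thus $\wp = m_{k,x}$. It follows that a prime ideal of $\m{C}(X,k)$ is at once closed and minimal precisely when it is $m_{k,x}$ for a P-point $x$, which by the previous paragraph is the same as being a maximal ideal of height $0$; hence ``$\m{C}(X,k)$ has a closed minimal prime ideal'' and ``$\m{C}(X,k)$ has a maximal ideal of height $0$'' are equivalent, giving (ii) $\Leftrightarrow$ (iii).

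The partition-of-unity computation for the Nullstellensatz and the bookkeeping through Proposition \ref{criterion for a P-point} are routine. The main obstacle is the claim that a closed minimal prime must be maximal: this is the one place where the Banach structure of $\m{C}(X,k)$, not merely its ring structure, is used, since closedness of $\wp$ is exactly what allows one to annihilate the ``infinitesimal tails'' $f 1_{U_n}$ — something that genuinely fails for minimal primes which are not closed, and such primes do lie strictly below $m_{k,x}$ whenever $x$ is not a P-point.
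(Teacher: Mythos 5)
Your proof is correct, and its logical skeleton coincides with the paper's: identify the maximal ideals of $\m{C}(X,k)$ with the ideals $m_{k,x}$, invoke Proposition \ref{criterion for a P-point} to translate ``height $0$'' into ``P-point'', and use the fact that a closed prime ideal is maximal to pass between (ii) and (iii). The difference is that you supply direct proofs of the two ingredients the paper imports by citation. Where the paper appeals to the non-Archimedean Stone--Weierstrass theorem (\cite{Ber} 9.2.5) and the identification $\M_k(\m{C}(X,k)) \cong \m{Max}(\m{C}(X,k))$ from \cite{Mih} 3.3 to see that every maximal ideal is some $m_{k,x}$, you prove the Nullstellensatz by hand with a clopen partition of unity and compactness; and where the paper cites \cite{Mih} 2.7 for ``closed prime $\Rightarrow$ maximal'', you prove it by truncating $f \in m_{k,x}$ with the idempotents $1_{X \backslash U_n}$ and using closedness of $\wp$ to absorb the tails $f 1_{U_n}$. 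Both of your substitute arguments are sound (the extension of $1/f_{x_{i(j)}}$ by $0$ off the clopen set $V_j$ is continuous and bounded because $|f_{x_{i(j)}}|$ is bounded below there, and $1_{X\backslash U_n} \in \wp$ follows correctly from primality since $1_{U_n} \notin m_{k,x} \supseteq \wp$). What your route buys is self-containedness: the corollary no longer depends on the Berkovich-spectrum machinery or on the companion paper, only on compactness, the clopen basis from Lemma \ref{zero dimensional}, and completeness of the supremum norm. What the paper's route buys is brevity and consistency with its broader framework, in which $\m{Max}(\m{C}_{\ms{bd}}(X,k))$ is one of the standard models of $\m{TD}(X)$.
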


\begin{proof}
Suppose the condition (i) holds. By the non-Archimedean generalised Stone-Weierstrass theorem, \cite{Ber} 9.2.5, the evaluation map $X \to \M_k(\m{C}(X,k))$ is surjective, where $\M_k(\m{C}(X,k))$ is the Berkovich spectrum of the $k$-algebra $\m{C}(X,k)$ endowed with the supremum norm. By the proof of \cite{Mih} 3.3, there is a canonical bijective map $\M_k(\m{C}(X,k)) \to \m{Max}(\m{C}(X,k))$ compatible with the evaluation maps, and hence any maximal ideal of $\m{C}(X,k)$ is of the form $m_{k,x}$ for some $x \in X$. Therefore by Proposition \ref{criterion for a P-point}, no maximal ideal is of height $0$: the condition (ii). Suppose the condition (ii) holds. Take a minimal prime ideal $\wp \subset \m{C}(X,k)$. Since $\m{C}(X,k)$ does not has a maximal ideal of height $0$, $\wp$ is not a maximal ideal. Since a closed prime ideal is a maximal ideal by \cite{Mih} 2.7, and hence $\wp$ is not closed: the condition (iii). Suppose the condition (iii) holds. Take a point $x \in X$. The maximal ideal $m_{k,x} \subset \m{C}(X,k)$ is closed by the completeness of $\m{C}(X,k)$ and by \cite{BGR} 1.2.4/5, and hence $m_{k,x}$ is not of height $0$ by the condition (iii). Therefore $x$ is not a P-point by Proposition \ref{criterion for a P-point}.
\end{proof}

\begin{corollary}
Suppose $X$ is a non-empty totally disconnected compact Hausdorff topological space and the valuation of $k$ is not trivial. Then the following are equivalent:
\begin{itemize}
\item[(i)] $|X| < \aleph_0$;
\item[(ii)] $|\m{Spec}(\m{C}(X,k))| < \aleph_0$;
\item[(iii)] $\dim \m{C}(X,k) = 0$; and
\item[(iv)] Any prime ideal of the $k$-algebra $\m{C}(X,k)$ is closed with respect to the topology given by the supremum norm.
\end{itemize}
\end{corollary}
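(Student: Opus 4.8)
The plan is to show that each of the conditions (ii), (iii) and (iv) is equivalent to (i), so that all four become equivalent. The implication (i)~$\Rightarrow$~(ii),~(iii),~(iv) is the easy half: if $|X| < \aleph_0$ then $X$ is a finite Hausdorff space, hence discrete, and since $X$ is compact one has $\m{C}(X,k) = \m{C}_{\ms{bd}}(X,k) \cong_k k^{|X|}$, a finite product of copies of the field $k$. This ring has exactly $|X|$ prime ideals, all of them maximal (so $\dim \m{C}(X,k) = 0$) and each a finite-dimensional $k$-subspace of $\m{C}(X,k)$, hence closed; this yields (ii), (iii) and (iv) at once.

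For the converse I would argue by contraposition, assuming $|X| \geq \aleph_0$ and refuting (ii), (iii) and (iv) in turn. For (ii): given distinct points $x,y \in X$, Lemma \ref{zero dimensional} provides a clopen set $U \in \m{CO}(X)$ with $x \in U$ and $y \notin U$, whence $1_{X \backslash U} \in m_{k,x} \backslash m_{k,y}$; thus $x \mapsto m_{k,x}$ is an injection from $X$ into $\m{Spec}(\m{C}(X,k))$, and $|\m{Spec}(\m{C}(X,k))| \geq |X| \geq \aleph_0$, refuting (ii).

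The core of the argument is the refutation of (iii), which will simultaneously refute (iv). I claim that an infinite totally disconnected compact Hausdorff space $X$ admits a point that is \emph{not} a P-point. To prove the claim, choose a countably infinite set $A = \set{x_n}{n \in \N} \subset X$ of pairwise distinct points; by compactness $A$ has an $\omega$-accumulation point $x \in X$, i.e.\ every neighbourhood of $x$ meets $A$ in an infinite set. Suppose, for contradiction, that $x$ is a P-point. For each $n \in \N$ with $x_n \neq x$, Lemma \ref{zero dimensional} furnishes a clopen neighbourhood $U_n \in \m{CO}(X) \cap N_{X,x}$ with $x_n \notin U_n$; put $U_n \coloneqq X$ for the at most one index with $x_n = x$. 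Since $x$ is a P-point, $\bigcap_{n \in \N} U_n$ is a neighbourhood of $x$, and hence contains a clopen neighbourhood $V$ of $x$; but $V \cap A \subset \{ x \}$ by construction, contradicting that $x$ is an $\omega$-accumulation point of $A$. This proves the claim, so by Proposition \ref{criterion for a P-point} the maximal ideal $m_{k,x} \subset \m{C}(X,k)$ is not of height $0$. Any prime ideal $\wp \subsetneq m_{k,x}$ then witnesses $\dim \m{C}(X,k) \geq 1$, contradicting (iii); and $\wp$ is not maximal, hence not closed by \cite{Mih} 2.7, contradicting (iv). (One may also read off (iii)~$\Leftrightarrow$~(iv) directly: if $\dim \m{C}(X,k) = 0$ then every prime is maximal, hence closed by the completeness of $\m{C}(X,k)$ together with \cite{BGR} 1.2.4/5; conversely if every prime is closed, then every minimal prime is maximal by \cite{Mih} 2.7, so $\dim \m{C}(X,k) = 0$.)

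The step I expect to be the genuine obstacle is the topological claim that an infinite totally disconnected compact Hausdorff space cannot be a P-space; the remaining steps are bookkeeping resting on Proposition \ref{criterion for a P-point} and on elementary facts about finite products of fields and closed ideals in a Banach $k$-algebra. Within that claim, the only point demanding care is the possibility that the accumulation point $x$ lies in $A$, which is precisely why $U_n$ is declared to be $X$ for the corresponding index.
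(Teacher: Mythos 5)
Your proof is correct, but the hard direction is handled by a genuinely different argument from the paper's. The paper proves the cycle (i)$\Rightarrow$(ii)$\Rightarrow$(iii)$\Rightarrow$(iv)$\Rightarrow$(i); its only substantial step, (iv)$\Rightarrow$(i), first deduces from closedness of all primes that \emph{every} point of $X$ is a P-point, and then, assuming $|X| \geq \aleph_0$, builds by induction a pairwise disjoint sequence of non-empty clopen sets $U_n$ and the function $f = \sum_n a^n 1_{U_n}$ with infinite image; since all points are P-points, every fibre $f^{-1}(\{b\})$ is open, producing an infinite disjoint open cover of the compact space $X$ --- a contradiction. You instead prove directly the purely topological claim that an infinite compact Hausdorff zero-dimensional space has a non-P-point, by taking an $\omega$-accumulation point $x$ of a countably infinite subset $A$ and separating $x$ from each $x_n \in A \setminus \{x\}$ by a clopen neighbourhood; the P-point property would then shrink $A$ near $x$ to a finite set. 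This is shorter and avoids the inductive construction of the disjoint clopen family and of $f$, at the cost of invoking countable compactness (every countably infinite set has an $\omega$-accumulation point); the two arguments establish the same intermediate fact, namely that an infinite totally disconnected compact Hausdorff space is not a P-space. Your treatment of the remaining equivalences is also organised differently but soundly: you refute (ii) under $\neg$(i) by injecting $X$ into $\m{Spec}(\m{C}(X,k))$ via $x \mapsto m_{k,x}$ (using clopen separation), where the paper instead proves (ii)$\Rightarrow$(iii) by embedding $\m{C}(X,k)$ into a finite product $k^{M_{k,X}}$; and your direct (iii)$\Leftrightarrow$(iv) via \cite{BGR} 1.2.4/5 and \cite{Mih} 2.7 matches the paper's. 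No gaps.
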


\begin{proof}
Firstly, suppose the condition (i) holds. Since $X$ is a finite set, the direct product $k^X$ is a finite $k$-algebra. In particular one has $k^X$ is finite over the $k$-subalgebra $\m{C}(X,k) \subset k^X$, and hence the associated continuous map $\m{Spec}(k^X) \to \m{Spec}(\m{C}(X,k))$ is surjective. Since $\m{Spec}(k^X) = \m{Spec}(k)^{\sqcup X}$, one has $|\m{Spec}(\m{C}(X,k))| = |X| < \aleph_0$, and therefore $|\m{Spec}(\m{C}(X,k))| < \aleph_0$: the condition (ii).

Secondly, suppose the condition (ii) holds. Denote by $M_{k,X} \subset \m{Spec}(\m{C}(X,k))$ the image of the evaluation map $X \to \m{Spec}(\m{C}(X,k)) \colon x \mapsto m_{k,x}$. The embedding $\m{C}(X,k) \hookrightarrow k^X$ induces an injective map $\m{C}(X,k) \hookrightarrow k^{M_{k,X}}$ by the definition of the evaluation map, and hence $\m{C}(X,k)$ is a finite $k$-algebra because $|M_{k,x}| \leq |\m{Spec}(\m{C}(X,k))| < \aleph_0$. A finite $k$-algebra is an Artinian ring, and its Krull dimension is $0$. Therefore one has $\dim \m{C}(X,k) = 0$: the condition (iii).

Thirdly suppose the condition (iii) holds. Then any prime ideal is a maximal ideal. Therefore by the completeness of $\m{C}(X,k)$ and by \cite{BGR} 1.2.4/5, any prime ideal is closed: the condition (iv).

Finally suppose the condition (iv) holds. For a point $x \in X$, take a prime ideal $\wp \in \m{Spec}(\m{C}(X,k))$ contained in $m_{k,x}$. Since $\wp$ is closed, $\wp$ is a maximal ideal by \cite{Mih} 2.7, and hence one has $\wp = m_{k,x}$. It follows $m_{k,x}$ is of height $0$, and therefore $x$ is a P-point by Proposition \ref{criterion for a P-point}. In order to prove $|X| < \aleph_0$, assume $|X| \geq \aleph_0$. We construct a bounded continuous function $f \colon X \to k$ whose image is not a finite set. To begin with, we show that for a clopen infinite subset $U \in \m{CO}(X)$, there is a clopen subset $U' \subset \m{CO}(X)$ such that $U'$ is a non-empty subset of $U$ and the complement $U \backslash U'$ is an infinite set. Since $U$ is an infinite set, there are two distinct points $x,y \in U$, and the complement $X \backslash \{ y \} \subset X$ is an open neighbourhood of $x$ because $X$ is Hausdorff. By Lemma \ref{zero dimensional}, there is a clopen neighbourhood $V \in \m{CO}(X) \cap N_{X,x}$ of $x$ in $X$ such that $y \notin V$. Since $U$ is an infinite set, at least one of the clopen subsets $V \cap U \subset X$ and $U \backslash V \subset X$ is an infinite set. Choose one, and denote by $U' \in \m{CO}(X)$ the other one. Since $U$ is a clopen subset of $X$, $U'$ is also a clopen subset of $X$, and is not empty because it contains one of $x$ and $y$. Now for a non-negative integer $n \in \N$, we define a non-empty clopen subset $U_n \in \m{CO}(X)$ in the inductive way so that the complement $X \backslash (U_0 \sqcup \cdots \sqcup U_n)$ is an infinite set for any $n \in \N$ and the collection $\{ U_n \mid n \in \N \}$ is pairwise disjoint. When $n = 0$, there is a non-empty clopen subset $U' \in \m{CO}(X)$ such that the complement $X \backslash U'$ is an infinite set, and put $U_0 \coloneqq U' \in \m{CO}(X)$. When $n > 0$, since the complement $X \backslash (U_0 \sqcup \cdots \sqcup U_{n - 1})$ is a clopen infinite subset of $X$, there is a clopen subset $U' \in \m{CO}(X)$ such that $U'$ is a non-empty subset of $X \backslash (U_0 \sqcup \cdots \sqcup U_{n - 1})$ and the complement $(X \backslash (U_0 \sqcup \cdots \sqcup U_{n - 1})) \backslash U'$ is an infinite set. Put $U_n \coloneqq U' \in \m{CO}(X)$. One has obtained a map $U \colon \N \to \m{CO}(X) \colon n \mapsto U_n$ such that the complement $X \backslash (U_0 \sqcup \cdots \sqcup U_n)$ is an infinite set for any $n \in \N$ and the collection $\{ U_n \mid n \in \N \}$ is pairwise disjoint. Since the valuation of $k$ is not trivial, there is an element $a \in k^{\times}$ such that $0 < |a| < 1$. Define a bounded function $f \colon X \to k$ by setting
\begin{eqnarray*}
  f(x) \coloneqq
  \left\{
    \begin{array}{ll}
      a^n & (x \in U_n, {}^{\exists} n \in \N)\\
      0 & (x \notin U_n, {}^{\forall} n \in \N)
    \end{array}
  \right..
\end{eqnarray*}
Since $f$ is constant on a clopen subset $U_n \subset X$, $f$ is continuous at a point in $U_n$ for any $n \in \N$. For a point $x \in X$ with $x \notin U_n$ for any $n \in \N$ and a positive number $\epsilon > 0$, take a non-negative integer $n \in \N$ with $|a|^{n + 1} < \epsilon$. Since $U_0,\ldots,U_n \subset U$ are clopen, the subset $U \coloneqq X \backslash (U_0,\ldots,U_n)$ is an open neighbourhood of $x$. One has
\begin{eqnarray*}
  f(U) = \{ 0 \} \sqcup \set{a^m}{m \in \N, m > n} \subset \set{b \in k}{|b - f(x)| < \epsilon},
\end{eqnarray*}
and hence $f$ is continuous at $x$. Therefore $f$ is a bounded continuous function whose image $S \subset k$ is an infinite set containing $\{ a^n \mid n \in \N \}$. Take a point $b \in S$. For a point $x \in f^{-1}(\{ b \})$, since $x$ is a P-point of $X$, the pre-image $f^{-1}(\{ b \}) \subset X$, which is the zero locus  of the bounded continuous function $f - f(b)$, contains a neighbourhood of $x$ by Proposition \ref{P-point and zero set}. Therefore $f^{-1}(b) \subset X$ is open. It follows that the collection $\{ f^{-1}(b) \mid b \in S \}$ is an infinite disjoint open covering of $X$ consisting of non-empty subset, and hence it contradicts the assumption that $X$ is compact. Thus we conclude $|X| < \aleph_0$: the condition (i).
\end{proof}

Thus the purely topological notion of a P-point is interpreted in the purely ring-theoretical property of ideals of the specific ring. The ring of bounded continuous functions possesses many other informations about the topology of the underlying space. For example, the non-Archimedean analogue of Tietze's extension theorem is useful when one analyses a closed subspace of a zero-dimensional topological space.

\begin{lemma}
\label{idempotent}
Suppose $X$ is a totally disconnected locally compact Hausdorff topological space. For a compact subset $F \subset X$, the pull-back $\m{CO}(X) \to \m{CO}(F) \colon U \mapsto U \cap F$ is surjective.
\end{lemma}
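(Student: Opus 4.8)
The plan is to reduce the statement to the separation of two disjoint compact subsets of $X$ by a single clopen subset, and then to obtain that separation from Lemma \ref{zero dimensional}. Fix a clopen subset $C \in \m{CO}(F)$ of the subspace $F$. Both $C$ and its complement $F \backslash C$ are closed in $F$; since $F$ is compact and $X$ is Hausdorff, $F$ is closed in $X$, so $C$ and $F \backslash C$ are closed in $X$, and being closed subsets of the compact set $F$ they are themselves compact. Thus $C$ and $F \backslash C$ are disjoint compact subsets of $X$ with $C \sqcup (F \backslash C) = F$. It therefore suffices to produce a clopen subset $U \in \m{CO}(X)$ with $C \subseteq U$ and $U \cap (F \backslash C) = \emptyset$: for such a $U$ one has $C \subseteq U \cap F$ because $C \subseteq U$ and $C \subseteq F$, while $U \cap F = (U \cap C) \cup (U \cap (F \backslash C)) = U \cap C \subseteq C$, so $U \cap F = C$ and $C$ lies in the image of the pull-back map.

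For the separation step I would argue as follows. By Lemma \ref{zero dimensional}, the set $\m{CO}(X)$ of clopen subsets forms a basis of $X$. Since $F \backslash C$ is a compact subset of the Hausdorff space $X$, it is closed, so $X \backslash (F \backslash C)$ is an open subset containing $C$. For each point $x \in C$ choose, using that $\m{CO}(X)$ is a basis, a clopen neighbourhood $V_x \in \m{CO}(X) \cap N_{X,x}$ with $V_x \subseteq X \backslash (F \backslash C)$. The family $\{ V_x \mid x \in C \}$ is an open covering of the compact set $C$, so there are finitely many points $x_1, \ldots, x_n \in C$ with $C \subseteq V_{x_1} \cup \cdots \cup V_{x_n}$. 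Put $U \coloneqq V_{x_1} \cup \cdots \cup V_{x_n}$. Then $U$ is a finite union of clopen subsets of $X$, hence $U \in \m{CO}(X)$, and $C \subseteq U$ while $U \cap (F \backslash C) = \emptyset$ since each $V_{x_i}$ is disjoint from $F \backslash C$. By the reduction above, $U \cap F = C$, which establishes the surjectivity of the pull-back.

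The argument is essentially routine once Lemma \ref{zero dimensional} is available; the only points requiring a little care are the compactness of $F \backslash C$ (which is what makes it closed in $X$ and lets us absorb it into the complement of a single clopen set after passing to a finite subcover) and the exact identity $U \cap F = C$ rather than merely $C \subseteq U \cap F$, for which the condition $U \cap (F \backslash C) = \emptyset$ is exactly what is needed. I do not expect any serious obstacle.
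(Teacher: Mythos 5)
Your proposal is correct and follows essentially the same route as the paper: both proofs note that $C$ and $F\backslash C$ are compact (hence closed) in $X$, cover $C$ by clopen neighbourhoods contained in $X\backslash(F\backslash C)$ via Lemma \ref{zero dimensional}, and take the union of a finite subcover to get the desired clopen $U$ with $U\cap F=C$. No gaps.
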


\begin{proof}
Since $F$ is compact and $X$ is Hausdorff, $F$ is a closed subset of $X$. Take a clopen subset $V \in \m{CO}(F)$. Since  $V$ and $F \backslash V$ are disjoint clopen subsets of $F$, they are disjoint closed subsets in $X$. The topological space $V$ is closed in the compact topological space $F$, and hence $V$ is compact. Since $X \backslash (F \backslash V)$ is an open subset containing $V$, there is a clopen neighbourhood of $x$ contained in $X \backslash (F \backslash V)$ for each point $x \in V$ by Lemma \ref{zero dimensional}. Therefore there is a finite covering of $V$ consisting of clopen subsets of $X$ contained in $X \backslash (F \backslash V)$, and the union $U$ of the covering is a clopen subset of $X$ containing $V$ and contained in $X \backslash (F \backslash V)$. Thus one has obtained a clopen subset $U \in \m{CO}(X)$ with $U \cap F = V$.
\end{proof}

\begin{lemma}
\label{Tietze}
Suppose $X$ is a totally disconnected locally compact Hausdorff topological space. For a compact subset $F \subset X$, the restriction $\m{C}_{\ms{bd}}(X,k) \to \m{C}(F,k) \colon f \mapsto f|_F$ is surjective.
\end{lemma}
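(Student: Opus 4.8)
The plan is to reduce the statement to two ingredients: first, that every $g \in \m{C}(F,k)$ is a uniform limit of $k$-valued locally constant functions on $F$; and second, that every $k$-valued locally constant function on $F$ extends to a $k$-valued bounded locally constant --- hence continuous --- function on $X$ whose supremum norm equals that of the original. Granting these, I would fix $g \in \m{C}(F,k)$, which is bounded because $F$ is compact. If the valuation of $k$ is trivial, then $k$ carries the discrete topology, so $g$ is itself locally constant and the second ingredient alone finishes the proof. Otherwise fix $a \in k^{\times}$ with $0 < |a| < 1$ and choose, for each $n \in \N$, a locally constant $g_n \colon F \to k$ with $\| g - g_n \|_{\infty,F} \le |a|^{n}$. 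Put $h_0 \coloneqq g_0$ and $h_n \coloneqq g_n - g_{n - 1}$ for $n \ge 1$; each $h_n$ is locally constant on $F$, the ultrametric inequality gives $\| h_n \|_{\infty,F} \le |a|^{n - 1}$ for $n \ge 1$, and $\sum_{m = 0}^{n} h_m = g_n \to g$ uniformly on $F$. Extending each $h_n$ to $\tilde{h}_n \in \m{C}_{\ms{bd}}(X,k)$ with $\| \tilde{h}_n \|_{\infty,X} = \| h_n \|_{\infty,F}$, the series $f \coloneqq \sum_{n \in \N} \tilde{h}_n$ converges in $\m{C}_{\ms{bd}}(X,k)$ because its terms tend to $0$ and $\m{C}_{\ms{bd}}(X,k)$ is complete for the supremum norm; then $f|_F = \sum_{n} h_n = g$, as desired.

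For the first ingredient, note that $F$ is a totally disconnected compact --- hence locally compact --- Hausdorff topological space, so by Lemma \ref{zero dimensional} the set $\m{CO}(F)$ of clopen subsets forms a basis of $F$. Given $g \in \m{C}(F,k)$ and $\epsilon > 0$, for each $x \in F$ the continuity of $g$ yields an open neighbourhood on which $g$ deviates from $g(x)$ by less than $\epsilon$, and shrinking it produces $V_x \in \m{CO}(F) \cap N_{F,x}$ with the same property. By compactness finitely many $V_{x_1}, \dots, V_{x_m}$ cover $F$, and the atoms of the finite Boolean subalgebra of $\m{CO}(F)$ generated by them form a finite clopen partition of $F$ refining this cover. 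Picking a point in each block and letting $g' \colon F \to k$ take the value of $g$ there defines a locally constant function with $\| g - g' \|_{\infty,F} \le \epsilon$.

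For the second ingredient, let $\phi \colon F \to k$ be locally constant. Since $F$ is compact, $\phi$ takes finitely many values $c_1, \dots, c_s$, and $\phi^{-1}(\{ c_1 \}), \dots, \phi^{-1}(\{ c_s \})$ is a clopen partition of $F$. By Lemma \ref{idempotent} choose $V_i \in \m{CO}(X)$ with $V_i \cap F = \phi^{-1}(\{ c_i \})$; replacing each $V_i$ by $V_i$ minus the union of the previously chosen ones keeps $V_i$ clopen in $X$ and, since the level sets $\phi^{-1}(\{ c_j \})$ are pairwise disjoint, does not change $V_i \cap F$, so we may assume the $V_i$ are pairwise disjoint. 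Then $\tilde{\phi} \coloneqq \sum_{i = 1}^{s} c_i 1_{V_i}$ is a finite $k$-linear combination of characteristic functions of clopen subsets of $X$, hence a $k$-valued bounded locally constant function on $X$; it restricts to $\phi$ on $F$, and $\| \tilde{\phi} \|_{\infty,X} = \max_i |c_i| = \| \phi \|_{\infty,F}$.

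The hard part is keeping the supremum norms of the successive extensions under control so that the assembled series converges in $\m{C}_{\ms{bd}}(X,k)$, and this is exactly what the disjointification in the second ingredient buys: it makes the extension of a locally constant function on $F$ again locally constant with unchanged norm, and it is legitimate precisely because the level sets being extended are disjoint. I expect the approximation in the first ingredient to be the other place where care is needed, since it genuinely uses both hypotheses on $F$ --- total disconnectedness supplies the clopen basis via Lemma \ref{zero dimensional}, and compactness turns the resulting clopen cover into a finite clopen partition.
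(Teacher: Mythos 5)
Your proposal is correct and follows essentially the same route as the paper: both approximate $g$ uniformly by locally constant functions via finite clopen partitions of $F$, lift the pieces to clopen subsets of $X$ using Lemma \ref{idempotent}, and sum the resulting extensions as a convergent series in the complete space $\m{C}_{\ms{bd}}(X,k)$. The only difference is organizational --- the paper interleaves approximation and extension in a single induction on the remainders $g_n = g_{n-1} - f_n|_F$, whereas you separate the two ingredients and telescope, and your disjointification of the lifted clopen sets is a harmless refinement the paper omits.
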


\begin{proof}
Since $F$ is compact, any $k$-valued continuous function on $F$ is bounded. Take a $k$-valued continuous function $g \colon F \to k$. For a non-negative integer $n \in \N$, we construct a $k$-valued bounded continuous functions $f_n \colon X \to k$ and $g_n \coloneqq F \to k$ in an inductive way. When $n = 0$, set $f_n \coloneqq 0 \in \m{C}_{\ms{bd}}(X,k)$ and $g_n \coloneqq g \in \m{C}(F,k)$. Suppose $n > 0$. Denote by $\U_n \subset 2^k$ the partition of $k$ into open discs of radii $n^{-1}$, which are clopen in $k$. The pre-image $g_{n - 1}^{-1}(\U_n) \coloneqq \{ g_{n - 1}^{-1}(U) \mid U \in \U_n \} \backslash \{ \emptyset \} \subset 2^F$ is a partition of $F$ into disjoint non-empty clopen subsets of $F$. Since $F$ is compact, the partition $g_{n - 1}^{-1}(\U_n)$ of $F$ by disjoint non-empty clopen subsets is a finite covering. By Lemma \ref{idempotent}, there is a family $\V_n \subset \m{CO}(X)$ of clopen subsets of $X$ such that the pull-back $\m{CO}(X) \to \m{CO}(F)$ induces a bijective map $\V_n \to g_{n - 1}^{-1}(\U_n)$. Since $g_{n - 1}^{-1}(\U_n)$ is a finite set consisting of non-empty sets, so is $\V_n$. Take a representative $a_{n,V} \in V$ for each $V \in \V_n$. The finite sum $f_n \coloneqq \sum_{V \in \V_n} g_{n - 1}(a_{n,V}) 1_V$ is a $k$-valued locally constant continuous function on $X$ with finite image, and set $g_n \coloneqq g_{n - 1} - f_n|_F \in \m{C}(F,k)$. By the construction of $f_n$, one has $\| g_n \| \leq n^{-1}$ and $\| f_n \| \leq \| g_{n - 1} \|$, where $\| \cdot \| \colon \m{C}(F,k) \to [0,\infty)$ and $\| \cdot \| \colon \m{C}_{\ms{bd}}(X,k) \to [0,\infty)$ are the supremum norms. It follows that the sequence $(f_n)_{n \in \N} \in \m{C}_{\ms{bd}}(X,k)^{\N}$ satisfies $\| f_{n + 1} \| \leq n^{-1}$ for any $n \in \N$, and hence the infinite sum $\sum_{n \in \N} f_n$ has the uniform convergence limit $f \in \m{C}_{\ms{bd}}(X,k)$ because $\m{C}_{\ms{bd}}(X,k)$ is complete with respect to the non-Archimedean norm $\| \cdot \|$. Moreover one has
\begin{eqnarray*}
  \left\| g - f|_F \right\| = \left\| g - \sum_{n \in \N} f_n|_F \right\| = \lim_{m \in \N} \left\| g_m - \sum_{n = m + 1} f_n|_F \right\| \leq \lim_{m \to \infty} m^{-1} = 0,
\end{eqnarray*}
and hence $f|_F = g$.
\end{proof}

In order to make use of the non-Archimedean Tietze's extension theorem for the theory of a P-point, we introduce the notion of an absolute value function $A \colon k \to k$, which is analogous to the absolute value $| \cdot | \colon \C \to \C \colon z \mapsto |z|$. An absolute value function sometimes helps one to avoid problems originating from the absence of the notion of the positivity in the $p$-adic world.

\begin{definition}
For a section $\sigma \colon |k| \to k$ of the norm $| \cdot | \colon k \to [0,\infty)$, denote by $A_{\sigma} \colon k \to k$ the composition of the section $\sigma$ and the norm $| \cdot |$. A map $A \colon k \to k$ is said to be an absolute value function if there is a section $\sigma \colon |k| \to k$ of the norm $| \cdot |$ such that $A_{\sigma} = A$.
\end{definition}

\begin{lemma}
An absolute value function $A \colon k \to k$ is continuous.
\end{lemma}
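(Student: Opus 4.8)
The plan is to exploit the ultrametric nature of the norm on $k$, which forces $A = \sigma \circ |\cdot|$ to be locally constant away from the origin and to scale the norm exactly. Fix a section $\sigma \colon |k| \to k$ of $|\cdot|$ with $A_{\sigma} = A$, so that $|\sigma(t)| = t$ for every $t \in |k|$ and $A(x) = \sigma(|x|)$ for every $x \in k$. The first step is to pin down the behaviour of $A$ at $0$: since $|\sigma(0)| = 0$, necessarily $\sigma(0) = 0$, hence $A(0) = 0$, and moreover $|A(x)| = |\sigma(|x|)| = |x|$ for all $x \in k$.

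Next I would treat continuity at a nonzero point $x \in k$. For $y \in k$ with $|y - x| < |x|$ the strong triangle inequality gives $|y| = |(y - x) + x| = |x|$, and therefore $A(y) = \sigma(|y|) = \sigma(|x|) = A(x)$. Thus $A$ is constant on the open ball of radius $|x|$ about $x$, so $A$ is in fact locally constant, hence continuous, at every point of $k \backslash \{ 0 \}$. Continuity at $0$ is then handled separately using the scaling identity from the first step: given $\epsilon > 0$, choosing $\delta \coloneqq \epsilon$ yields $|A(y) - A(0)| = |A(y)| = |y| < \epsilon$ whenever $|y| < \delta$. Combining the two cases shows $A$ is continuous on all of $k$.

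I do not anticipate a real obstacle here; the only point requiring a moment's care is that the local-constancy argument fails precisely at the origin, so one must use a genuinely different (and equally elementary) estimate there, and one must first observe that being a section of the norm forces $\sigma(0) = 0$.
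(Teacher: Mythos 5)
Your proof is correct and follows essentially the same route as the paper's: local constancy on the ball $|y - x| < |x|$ via the strong triangle inequality at nonzero points, and the norm-preservation identity $|A(y)| = |y|$ at the origin. You merely spell out the intermediate facts ($\sigma(0) = 0$ and $|y| = |x|$) that the paper leaves implicit.
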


\begin{proof}
Take a point $a \in k$ and a positive number $\epsilon$. If $a \neq 0$, then one has $| A(a) - A(b)| = 0 < \epsilon$ for any $b \in k$ with $|b - a| < |a|$. Otherwise, one obtains $|A(b) - A(a)| = |A(b) - A(0)| = |A(b)| = |b| < \epsilon$ for any $b \in k$ with $|b - a| < \epsilon$. Thus $A$ is continuous.
\end{proof}

\begin{lemma}
For an element $u \in k$ with $|u| = 1$ and an absolute value function $A \colon k \to k$, the product $uA \colon k \to k \colon a \mapsto uA(a)$ is an absolute value function.
\end{lemma}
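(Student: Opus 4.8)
The plan is to exhibit explicitly a section of the norm $|\cdot| \colon k \to [0,\infty)$ that realises $uA$ as its associated absolute value function. Since $A$ is an absolute value function, fix a section $\sigma \colon |k| \to k$ of the norm with $A_{\sigma} = A$; by definition this means $|\sigma(t)| = t$ for every $t \in |k|$ and $A = \sigma \circ |\cdot|$. I would then define $\tau \colon |k| \to k$ by $\tau(t) \coloneqq u\sigma(t)$, which indeed lands in $k$ since $u \in k$ and $\sigma(t) \in k$.

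The first step is to check that $\tau$ is again a section of the norm: for $t \in |k|$, the multiplicativity of the valuation on $k$ gives $|\tau(t)| = |u\sigma(t)| = |u|\,|\sigma(t)| = 1 \cdot t = t$, where the hypothesis $|u| = 1$ is used. Hence $\tau$ is a section of $|\cdot|$, so $A_{\tau} \colon k \to k$ is an absolute value function by definition. The second step is to identify $A_{\tau}$ with $uA$: for any $a \in k$ one computes $A_{\tau}(a) = \tau(|a|) = u\sigma(|a|) = uA_{\sigma}(a) = uA(a)$, so $A_{\tau} = uA$, and therefore $uA$ is an absolute value function.

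There is essentially no obstacle here; the statement is a one-line verification once the candidate section $u\sigma$ is written down. The only point that must not be overlooked is that the norm on the valuation field $k$ is multiplicative, which is precisely what allows the scalar $u$ with $|u| = 1$ to pass through the product $u\sigma(t)$ without changing its absolute value. (Conceptually, this is the observation that the group of norm-one elements of $k$ acts on the set of absolute value functions by pointwise multiplication.)
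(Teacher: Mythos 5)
Your proof is correct and takes essentially the same route as the paper, which likewise observes that $u\sigma$ is again a section of the norm (using $|u|=1$ and multiplicativity) and that $A_{u\sigma} = uA_{\sigma} = uA$. You merely spell out the verification that the paper leaves implicit.
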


\begin{proof}
For the section $\sigma \colon |k| \to k$ of the norm $| \cdot |$ with $A_{\sigma} = A$, one has $uA = uA_{\sigma} = A_{u \sigma}$ for the section $u \sigma \colon |k| \to k \colon a \mapsto u \sigma(a)$ of the norm $| \cdot | \colon k \to [0,\infty)$.
\end{proof}

\begin{lemma}
For an absolute value function $A \colon k \to k$ and a $k$-valued bounded continuous function $f \colon X \to k$, the composition $f_A \coloneqq A \circ f \colon X \to k$ a $k$-valued bounded continuous function.
\end{lemma}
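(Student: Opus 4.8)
The plan is to read off both assertions almost directly from the definitions: continuity of $f_A$ from functoriality of composition, and boundedness of $f_A$ from the fact that an absolute value function, being a section of the norm composed with the norm, is norm-preserving.

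First I would invoke the earlier lemma asserting that an absolute value function $A \colon k \to k$ is continuous. Since $f \colon X \to k$ is continuous by hypothesis, the composition $f_A = A \circ f \colon X \to k$ is continuous as a composition of continuous maps between topological spaces.

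Next I would check boundedness. Write $A = A_{\sigma} = \sigma \circ (|\cdot|)$ for a section $\sigma \colon |k| \to k$ of the norm $|\cdot| \colon k \to [0,\infty)$. By the defining property of a section one has $|\sigma(t)| = t$ for every $t \in |k|$, and hence $|A(a)| = |\sigma(|a|)| = |a|$ for every $a \in k$. Applying this with $a = f(x)$ gives $|f_A(x)| = |f(x)|$ for every $x \in X$, so $\sup_{x \in X} |f_A(x)| = \sup_{x \in X} |f(x)| < \infty$ because $f$ is bounded; in particular $f_A$ has exactly the same supremum norm as $f$. Combining the two paragraphs, $f_A$ is a $k$-valued bounded continuous function.

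There is essentially no obstacle here. The only point that requires a moment's care is to use the stronger identity $|\sigma(t)| = t$ coming from $\sigma$ being a \emph{section} of the norm, rather than merely the continuity of $A$ supplied by the previous lemma, since continuity alone would not yield the norm-preserving equality $|A(a)| = |a|$ that makes the boundedness argument work.
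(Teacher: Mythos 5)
Your proposal is correct and follows exactly the paper's own argument: continuity of $f_A$ from the continuity of $A$ established in the preceding lemma, and boundedness from the fact that $A$ preserves the norm. You merely spell out the norm-preservation identity $|A(a)| = |\sigma(|a|)| = |a|$, which the paper leaves as ``obvious.''
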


\begin{proof}
Since an absolute value function is continuous, it suffices to show the boundedness of the composition $A \circ f$. It is obvious because an absolute value function preserves the norm.
\end{proof}

\begin{lemma}
Suppose the residual characteristic $p \in \N$ of $k$ is not $2$. For an absolute value function $A \colon k \to k$ and $k$-valued bounded continuous functions $f,g \colon X \to k$, one has
\begin{eqnarray*}
  |f_A(x) + g_A(x)| = \max \{ |f(x)|,|g(x)| \}
\end{eqnarray*}
for any $x \in X$, and hence
\begin{eqnarray*}
  (f_A + g_A)^{-1}(\{ 0 \}) = f^{-1}(\{ 0 \}) \cap g^{-1}(\{ 0 \}).
\end{eqnarray*}
\end{lemma}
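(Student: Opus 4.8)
The plan is to reduce everything to the single observation that an absolute value function depends on its argument only through its norm, together with the fact that $|2| = 1$ when $p \neq 2$. First I would recall that for the section $\sigma \colon |k| \to k$ with $A_{\sigma} = A$, one has $|A(a)| = |\sigma(|a|)| = |a|$ for every $a \in k$, since $\sigma$ is a section of the norm; in particular $A(0) = \sigma(0) = 0$. Applying this to $a = f(x)$ and $a = g(x)$ gives $|f_A(x)| = |f(x)|$ and $|g_A(x)| = |g(x)|$, and the ultrametric inequality already yields $|f_A(x) + g_A(x)| \leq \max \{ |f(x)|, |g(x)| \}$ for every $x \in X$.

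For the reverse inequality I would split into cases according to the relation between $|f(x)|$ and $|g(x)|$. If $|f(x)| \neq |g(x)|$, then $|f_A(x)| \neq |g_A(x)|$ and the standard fact about non-Archimedean absolute values gives $|f_A(x) + g_A(x)| = \max \{ |f_A(x)|, |g_A(x)| \} = \max \{ |f(x)|, |g(x)| \}$. If $|f(x)| = |g(x)| = 0$, then $f(x) = g(x) = 0$, so $f_A(x) = g_A(x) = 0$ and the identity is trivial. The remaining case, $|f(x)| = |g(x)| = t$ with $t > 0$, is the only place where the hypothesis $p \neq 2$ is used: here $f_A(x) = \sigma(|f(x)|) = \sigma(t) = \sigma(|g(x)|) = g_A(x)$, so $f_A(x) + g_A(x) = 2 \sigma(t)$, and since the residual characteristic is not $2$ the element $2$ is a unit of the valuation ring, so $|2| = 1$ and $|f_A(x) + g_A(x)| = |2| \cdot |\sigma(t)| = t = \max \{ |f(x)|, |g(x)| \}$. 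Combining the cases proves the displayed equality.

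Finally, the statement about zero loci is immediate from the equality just proved: $x \in (f_A + g_A)^{-1}(\{ 0 \})$ if and only if $|f_A(x) + g_A(x)| = 0$, which by the equality is equivalent to $\max \{ |f(x)|, |g(x)| \} = 0$, i.e.\ to $|f(x)| = 0$ and $|g(x)| = 0$, that is $x \in f^{-1}(\{ 0 \}) \cap g^{-1}(\{ 0 \})$. The only subtle point in the whole argument is the case of equal nonzero norms, where one must notice that the two values $f_A(x)$ and $g_A(x)$ are not merely of equal norm but literally equal, so that their sum is $2$ times a single value rather than a genuine sum of distinct terms whose norms could drop; it is precisely to control this that $p \neq 2$ is assumed.
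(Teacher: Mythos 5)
Your proof is correct and follows essentially the same route as the paper: reduce to the norm-preservation of $A$, dispose of the case $|f(x)| \neq |g(x)|$ by the standard ultrametric fact, and in the equal-norm case observe that $f_A(x) = \sigma(|f(x)|) = g_A(x)$ so the sum is $2\sigma(|f(x)|)$, which has the right norm because $|2| = 1$ when $p \neq 2$. If anything, your write-up is slightly cleaner than the paper's, which conflates elements of $k$ with their norms in its displayed chain of equalities.
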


\begin{proof}
The second assertion is obvious from the first assertion, and therefore it suffices to verify $|f_A(x) + g_A(x)| = \max \{ |f(x)|,|g(x)| \}$ for any $x \in X$. Since $A$ preserves the norm, the equality holds if $|f(x)| \neq |g(x)|$. Suppose $|f(x)| = |g(x)|$. Take the section $\sigma \colon |k| \to k$ of the norm $| \cdot | \colon k \to [0,\infty)$ with $A_{\sigma} = A$. Then one obtains
\begin{eqnarray*}
  f_A(x) + g_A(x) = \sigma(|f(x)|) + \sigma(|g(x)|) = 2 \sigma(|f(x)|) = f_{A_{2 \sigma}}(|f(x)|) = |f(x)| = \max \{ |f(x)|,|g(x)| \}
\end{eqnarray*}
because $|2| = 1$ by the assumption that $p \neq 2$.
\end{proof}

\begin{definition}
A topological space $X$ is said to be Lindel\"of if any open covering of $X$ has a countable subcovering, to be $\sigma$-compact if $X$ admits a countable family $\mathscr{K} \subset 2^X$ of compact subsets with $X = \bigcup \mathscr{K}$, and to be countable at infinity if $X$ admits an increasing countable sequence $K_0 \subset K_1 \subset \cdots \subset K_n \subset \cdots \subset X$ of compact subsets with $\bigcup_{n \in \N} K_n = X$ and $K_n$ is contained in the interior of $K_{n + 1} \subset X$ for any $n \in \N$. Call such an increasing countable sequence a compact exhaustion of $X$.
\end{definition}

\begin{remark}
The following implications are obvious:
\begin{itemize}
\item[(i)] A $\sigma$-compact topological space is Lindel\"of;
\item[(ii)] A topological space countable at infinity is $\sigma$-compact; and
\item[(iii)] A topological space is countable at infinity if and only if it is locally compact and Lindel\"of.
\end{itemize}
\end{remark}

\begin{definition}
A $k$-valued continuous function $f \colon X \to k$ is said to be compact-supported if the pre-image of the subset $\{ a \in k \mid |a| > \epsilon \} \subset k$ by $f$ is relatively compact in $X$ for any $\epsilon > 0$. A compact-supported continuous function is bounded, and the subset $\m{C}_0(X,k) \subset \m{C}_{\ms{bd}}(X,k)$ of compact-supported continuous functions is an ideal.
\end{definition}

Note that the ``relatively'' can be removed because an open disc in a non-Archimedean field is closed. Namely, a $k$-valued continuous function is compact-supported if and only if the pre-image of an open disc is compact.

\begin{lemma}
A totally disconnected locally compact Hausdorff topological space $X$ is countable at infinity if and only if $X$ admits a compact exhaustion consisting of clopen subsets. Call such a compact exhaustion a compact clopen exhaustion
\end{lemma}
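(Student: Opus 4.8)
The plan is to prove the two implications separately, the reverse one being essentially a matter of unwinding definitions. Suppose $X$ admits a compact exhaustion $L_0 \subset L_1 \subset \cdots \subset L_n \subset \cdots \subset X$ all of whose terms are clopen. Then each $L_n$ is in particular open, so $L_n = \m{int}(L_n) \subset \m{int}(L_{n + 1})$, and $\bigcup_{n \in \N} L_n = X$; thus the given sequence already witnesses that $X$ is countable at infinity. (In the reading where a ``compact clopen exhaustion'' just means a compact exhaustion all of whose terms are clopen, there is literally nothing to check for this direction.)

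For the forward implication, I would first isolate the one genuinely useful observation: in a totally disconnected locally compact Hausdorff space, every point $x$ has a \emph{compact clopen} neighbourhood. To see this, use local compactness to choose an open neighbourhood $W \subset X$ of $x$ with $\overline{W}$ compact, then invoke Lemma \ref{zero dimensional} to find a clopen subset $O$ with $x \in O \subset W$; since $O$ is clopen it equals $\overline{O} \subset \overline{W}$, so $O$ is a closed subset of the compact set $\overline{W}$ and is therefore itself compact. Hence $O$ is a compact clopen neighbourhood of $x$.

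Now start from a compact exhaustion $K_0 \subset K_1 \subset \cdots$ of $X$, which exists by hypothesis. For each $n \in \N$, cover the compact set $K_n$ by compact clopen neighbourhoods of its points, pass to a finite subcover by compactness, and let $V_n$ be the union of that finite subcover: a finite union of compact clopen sets, hence compact and clopen, with $K_n \subset V_n$. Finally set $L_n \coloneqq V_0 \cup \cdots \cup V_n$. Each $L_n$ is a finite union of compact clopen sets, so is compact and clopen; the sequence $(L_n)_{n \in \N}$ is increasing by construction; $L_n$ being open gives $L_n = \m{int}(L_n) \subset \m{int}(L_{n + 1})$; and $\bigcup_{n \in \N} L_n \supset \bigcup_{n \in \N} K_n = X$. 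Hence $(L_n)_{n \in \N}$ is a compact clopen exhaustion of $X$.

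The only step that is not pure formalism is the sublemma on compact clopen neighbourhoods, and even there the content is merely Lemma \ref{zero dimensional} combined with the triviality that a closed subset of a compact set is compact; I anticipate no real obstacle. One could alternatively phrase the forward direction through the remark that being countable at infinity is equivalent to being locally compact and Lindel\"of: cover $X$ by compact clopen neighbourhoods, extract a countable subcover $\{ O_m \}_{m \in \N}$ by the Lindel\"of property, and take $L_n \coloneqq O_0 \cup \cdots \cup O_n$, which yields the same conclusion.
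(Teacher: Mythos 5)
Your proposal is correct and follows essentially the same route as the paper: start from a compact exhaustion, cover each $K_n$ by clopen sets via Lemma \ref{zero dimensional}, extract a finite subcover by compactness, and take unions. The only cosmetic difference is that the paper takes the clopen covering sets inside the interior of $K_{n+1}$, which yields compactness of the union (as a closed subset of $K_{n+1}$) and monotonicity for free, whereas you secure compactness from local compactness directly and force monotonicity by passing to cumulative unions $L_n = V_0 \cup \cdots \cup V_n$; both devices work.
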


\begin{proof}
The direct assertion is trivial, and it suffices to show the inverse assertion. Suppose $X$ is countable at infinity. Take a compact exhaustion $K_0 \subset K_1 \subset \cdots \subset X$ of $X$. For any $n \in \N$, there is a clopen covering $\U_n \subset \m{CO}(X)$ of $K_n$ contained in the interior of $K_{n + 1}$, by Lemma \ref{zero dimensional}. Since $K_n$ is compact, $\U_n$ has a finite subcovering $\V_n \subset \U_n$, and the union $V_n \coloneqq \bigcup \V_n$ is a compact clopen subset of $X$ containing $K_n$ and contained in the interior of $K_{n + 1}$. Therefore the increasing sequence $V_0 \subset V_1 \subset \cdots \subset X$ is a compact clopen exhaustion.
\end{proof}

\begin{definition}
A complete valuation field $k$ is said to be a local field if $k$ is a complete discrete valuation field and its residue field is a finite field.
\end{definition}

\begin{lemma}
\label{compact-supported}
Suppose $X$ is a totally disconnected locally compact Hausdorff topological space and $k$ is a local field. A topological space $X$ is countable at infinity if and only if there is a compact-supported continuous function $f \colon X \to k$ with no zero.
\end{lemma}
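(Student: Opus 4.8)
The plan is to prove both directions by hand, constructing an explicit compact exhaustion of $X$ out of clopen subsets; I expect the only feature of $k$ that is really needed is the non-triviality of its valuation, so the local-field hypothesis is more than enough.

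For the ``only if'' direction I would argue as follows. Assume $X$ is countable at infinity; by the lemma above characterising countability at infinity via compact clopen exhaustions, $X$ then carries a compact clopen exhaustion $V_0 \subset V_1 \subset \cdots$. Set $W_0 \coloneqq V_0$ and $W_n \coloneqq V_n \backslash V_{n - 1}$ for $n \geq 1$: these are pairwise disjoint clopen subsets with union $X$, so each $x \in X$ belongs to $W_n$ for a unique $n$. Fixing $a \in k^{\times}$ with $0 < |a| < 1$, define $f \colon X \to k$ by $f(x) \coloneqq a^n$ when $x \in W_n$. Then $f$ is locally constant (constant on each clopen $W_n$), hence continuous, and it has no zero since $a^n \neq 0$. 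For $\epsilon > 0$, the pre-image $f^{-1}(\set{b \in k}{|b| > \epsilon})$ is the union of the $W_n$ with $|a|^n > \epsilon$; as $n \mapsto |a|^n$ is strictly decreasing with limit $0$, this index set is an initial segment of $\N$, so the union is some $V_N$ (or empty), which is compact. Thus $f$ is compact-supported with no zero.

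For the converse, given a compact-supported $f \colon X \to k$ with no zero, I would take $K_n \coloneqq f^{-1}(\set{b \in k}{|b| > (n + 1)^{-1}})$ for $n \in \N$. The set $\set{b \in k}{|b| > (n + 1)^{-1}}$ is clopen in $k$ (open by continuity of the norm, and closed by the ultrametric argument used in the proof of Lemma \ref{zero set}), so $K_n$ is clopen in $X$; being closed and, by compact-supportedness of $f$, relatively compact, it is compact. The sequence is increasing because $(n + 1)^{-1} > (n + 2)^{-1}$, each $K_n$ is contained in $K_{n + 1}$, which equals its own interior since it is open, and $\bigcup_{n \in \N} K_n = X$ because every $x \in X$ has $|f(x)| > 0$, whence $x \in K_n$ as soon as $(n + 1)^{-1} < |f(x)|$. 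Therefore $(K_n)_{n \in \N}$ is a compact exhaustion of $X$, so $X$ is countable at infinity.

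I do not foresee a genuine obstacle here: the one point to be careful about is the elementary non-Archimedean fact that the superlevel and sublevel sets $\set{b \in k}{|b| > \epsilon}$ and $\set{b \in k}{|b| \leq \epsilon}$ are both clopen in $k$, which is exactly what makes the pull-backs along $f$ simultaneously open and compact and hence produces compact clopen exhaustions on both sides. It would also be worth remarking in the write-up that neither direction actually uses the discreteness of the valuation or the finiteness of the residue field of $k$.
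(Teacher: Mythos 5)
Your proof is correct and follows essentially the same route as the paper: both directions hinge on passing between a compact clopen exhaustion and the superlevel sets $f^{-1}(\set{b \in k}{|b| > (n+1)^{-1}})$, and your locally constant function $a^n$ on $V_n \backslash V_{n-1}$ is, up to the unit factor $1-a$, exactly the paper's uniformly convergent sum $\sum_{n \in \N} a^n 1_{K_n}$. Your closing remark is also accurate: the paper invokes discreteness only to produce an element $a$ with $0 < |a| < 1$, for which non-triviality of the valuation suffices.
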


\begin{proof}
Suppose $X$ is countable at infinity. Take a compact clopen exhaustion $K_0 \subset K_1 \subset \cdots \subset X$. Since $k$ is a discrete valuation field, there is an element $a \in k$ such that $0 < |a| < 1$. Then the infinite sum $\sum_{n \in \N} a^n 1_{K_n}$ has the uniform convergence limit $f \in \m{C}_{\ms{bd}}(X,k)$, and it is obvious that $|f(x)| = |a|^n$ for any $x \in X$ and the smallest integer $n \in \N$ with $x \in K_n$. It follows $f \in \m{C}_0(X,k)$ and $f$ has no zero.

On the other hand, suppose there is a compact-supported continuous function $f \colon X \to k$ with no zero. Then for each non-negative integer $n \in \N$, the subset $K_n \coloneqq \{ x \in X \mid |f(x)| > (n + 1)^{-1} \} \subset X$ is a clopen subset by the argument in the proof of Lemma \ref{zero set}. Moreover, the clopen subset $K_n \subset X$ is compact by the definition of a compact-supported continuous function, and the increasing countable sequence $K_0 \subset K_1 \subset \cdots \subset X$ is a compact clopen exhaustion.
\end{proof}

\begin{proposition}
\label{restriction of zero sets}
Suppose $X$ is a totally disconnected compact Hausdorff topological space. For a closed subset $F \subset X$ whose complement $X \backslash F \subset X$ is countable at infinity, the pull-back $2^X \to 2^F \colon U \mapsto U \cap F$ induces a surjective map $\m{CO}_{\delta}(X) \twoheadrightarrow \m{CO}_{\delta}(F)$.
\end{proposition}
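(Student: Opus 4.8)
The plan is to show that, under the hypothesis, the closed subset $F$ is itself a $\m{CO}_{\delta}$-subset of $X$, and then to lift a defining clopen presentation of a given $G \in \m{CO}_{\delta}(F)$ term by term using Lemma \ref{idempotent}.

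First I would verify $F \in \m{CO}_{\delta}(X)$. Since $F \subset X$ is closed and $X$ is compact Hausdorff, the complement $X \backslash F$ is open in $X$, hence a locally compact Hausdorff totally disconnected topological space (an open subspace of a locally compact Hausdorff space is locally compact, and total disconnectedness passes to subspaces). By assumption $X \backslash F$ is countable at infinity, so by the lemma characterising countability at infinity for totally disconnected locally compact Hausdorff spaces it admits a compact exhaustion $K_0 \subset K_1 \subset \cdots$ all of whose members are clopen in $X \backslash F$. Each $K_m$ is then open in $X$ because $X \backslash F$ is open in $X$, and closed in $X$ because $K_m$ is compact and $X$ is Hausdorff; hence $K_m \in \m{CO}(X)$. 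As $\bigcup_{m \in \N} K_m = X \backslash F$, one gets $F = \bigcap_{m \in \N}(X \backslash K_m)$, a countable intersection of clopen subsets of $X$, i.e.\ $F \in \m{CO}_{\delta}(X)$.

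Next, given $G \in \m{CO}_{\delta}(F)$, write $G = \bigcap_{n \in \N} V_n$ with each $V_n \in \m{CO}(F)$ (a finite presentation is subsumed by repeating a term). By Lemma \ref{idempotent} applied to the compact subset $F \subset X$, choose for each $n$ a clopen subset $U_n \in \m{CO}(X)$ with $U_n \cap F = V_n$, and set $\tilde{G} \coloneqq F \cap \bigcap_{n \in \N} U_n = \bigcap_{m \in \N}(X \backslash K_m) \cap \bigcap_{n \in \N} U_n$. This is a countable intersection of clopen subsets of $X$, so $\tilde{G} \in \m{CO}_{\delta}(X)$; and since $\tilde{G} \subset F$, one has $\tilde{G} \cap F = \tilde{G} = \bigcap_{n \in \N}(F \cap U_n) = \bigcap_{n \in \N} V_n = G$. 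Hence $\tilde{G}$ maps to $G$ under the pull-back, which indeed sends $\m{CO}_{\delta}(X)$ into $\m{CO}_{\delta}(F)$ because $\left( \bigcap_{n \in \N} U_n \right) \cap F = \bigcap_{n \in \N} (U_n \cap F)$ with each $U_n \cap F \in \m{CO}(F)$. This proves surjectivity.

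I expect the only delicate point to be the first step: extracting from the ``countable at infinity'' hypothesis an exhaustion of $X \backslash F$ by sets that are clopen \emph{in $X$} rather than merely in $X \backslash F$, so that $F$ becomes a countable intersection of clopen subsets of $X$. This is exactly where compactness of $X$ (forcing the exhausting pieces to be closed in $X$) and total disconnectedness (providing a clopen exhaustion in the first place) are used. Everything after that is formal: term-by-term lifting via Lemma \ref{idempotent} together with a routine check that the resulting intersection restricts to $G$.
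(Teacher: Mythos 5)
Your proof is correct, but it takes a genuinely different route from the paper's. The paper argues function-theoretically: it realises the given $V \in \m{CO}_{\delta}(F)$ as the zero locus of a bounded continuous function $g \colon F \to \Q_3$ (Lemma \ref{zero set}), extends $g$ to $X$ by the non-Archimedean Tietze theorem (Lemma \ref{Tietze}), produces a compact-supported nowhere-vanishing function on $X \backslash F$ whose extension by zero has zero locus exactly $F$ (Lemma \ref{compact-supported}), and then combines the two zero loci via the absolute value function trick $(\tilde{f}_{A_{\sigma}} + \tilde{g}_{A_{\sigma}})^{-1}(\{0\}) = \tilde{f}^{-1}(\{0\}) \cap \tilde{g}^{-1}(\{0\})$, which is why an auxiliary field of residual characteristic $\neq 2$ appears. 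You instead stay entirely at the level of clopen sets: the compact clopen exhaustion of $X \backslash F$ consists of sets that are automatically clopen in $X$ (open because $X \backslash F$ is open, closed because they are compact in a Hausdorff space), giving $F \in \m{CO}_{\delta}(X)$ directly, and then Lemma \ref{idempotent} lifts each term of a presentation of $G$. Both arguments in fact prove the stronger containment $\m{CO}_{\delta}(F) \subset \m{CO}_{\delta}(X)$. Your version is more elementary and avoids the choice of an auxiliary field and the absolute-value-function machinery altogether; the paper's version illustrates its central theme that $\m{CO}_{\delta}$-subsets are precisely zero sets of $p$-adic continuous functions and reuses lemmas needed elsewhere. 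The one point you rightly flag as delicate --- that the exhausting pieces are clopen in $X$ and not merely in $X \backslash F$ --- is handled correctly.
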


\begin{proof}
It is obvious that the pull-back induces a map $\m{CO}_{\delta}(X) \twoheadrightarrow \m{CO}_{\delta}(F)$, and it suffices to show the surjectivity of it. More strongly, we verify that $\m{CO}_{\delta}(F) \subset \m{CO}_{\delta}(X)$. Take a $\m{CO}_{\delta}$-subset $V \in \m{CO}_{\delta}(F)$ of $F$. By Lemma \ref{zero set}, there is a bounded continuous function $g \colon F \to \Q_3$ such that $g^{-1}(\{ 0 \}) = V$, and by Lemma \ref{Tietze}, $g$ admits a bounded continuous extension $\tilde{g} \colon X \to \Q_3$. Since $X \backslash F$ is countable at infinity, there is a compact-supported continuous function $f \colon X \backslash F \to \Q_3$ with no zero by Lemma \ref{compact-supported}. Denote by $\tilde{f} \colon X \to \Q_3$ the extension of $f$ by $0$ outside $X \backslash F$. Since $X \backslash F \subset X$ is a open subset, the extension $\tilde{f}$ is continuous at a point of $X \backslash F$. For a point $x \in F$ and a positive number $\epsilon > 0$, denote by $U \subset X$ the pre-image of $\{ a \in k \mid |a| < \epsilon \}$ by $\tilde{f}$. Since $\tilde{f}(F) \subset \{ 0 \}$, the complement $X \backslash U$ is the pre-image of $\{ a \in k \mid |a| \geq \epsilon \}$ by $f$, and is compact because $f$ is compact-supported. Therefore $X \backslash U$ is a compact subset of the Hausdorff topological space $X$, and is closed. It follows $U \subset X$ is a open subset, and $\tilde{f}$ is continuous at $x \in F$. Thus $\tilde{f} \colon X \to k$ is continuous. By definition, one has $\tilde{f}^{-1}(\{ 0 \}) = F$. For a section $\sigma \colon |k| \to k$ of the norm $| \cdot |$, one obtains
\begin{eqnarray*}
  V = g^{-1}(\{ 0 \}) = F \cap \tilde{g}^{-1}(\{ 0 \}) = \tilde{f}^{-1}(\{ 0 \}) \cap \tilde{g}^{-1}(\{ 0 \}) = (\tilde{f}_{A_{\sigma}} + \tilde{g}^{-1}_{A_{\sigma}})^{-1}(\{ 0 \}),
\end{eqnarray*}
and thus $V \in \m{CO}_{\delta}(X)$ by Lemma \ref{zero set}.
\end{proof}

\begin{corollary}
Suppose $X$ is a totally disconnected compact Hausdorff topological space. For a closed subset $F \subset X$ whose complement is countable at infinity, a point $x \in F$ is a P-point of $F$ if and only if $x$ is a P-point of $X$.
\end{corollary}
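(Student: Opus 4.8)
The plan is to rephrase each of the conditions ``$x$ is a P-point of $F$'' and ``$x$ is a P-point of $X$'' in terms of $\m{CO}_{\delta}$-subsets, and then to move between $F$ and $X$ through Proposition~\ref{restriction of zero sets}. First I would observe that a closed subspace of a totally disconnected compact Hausdorff space is again totally disconnected compact Hausdorff, so both $X$ and $F$ satisfy the hypotheses of Proposition~\ref{P-point and zero set}. Fixing any local field of residual characteristic different from $2$, for instance $k = \Q_3$, whose valuation is non-trivial, Proposition~\ref{P-point and zero set} combined with Lemma~\ref{zero set} yields the following reformulation: for a totally disconnected locally compact Hausdorff space $Y$ and a point $y \in Y$, the point $y$ is a P-point of $Y$ if and only if every $\m{CO}_{\delta}$-subset of $Y$ containing $y$ is a neighbourhood of $y$ in $Y$. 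Indeed a $\m{CO}_{\delta}$-subset of $Y$ containing $y$ is automatically an intersection of countably many clopen subsets each containing $y$, hence of clopen neighbourhoods of $y$; conversely any countable family of neighbourhoods of $y$ refines, by Lemma~\ref{zero dimensional}, to a countable family of clopen neighbourhoods of $y$ whose intersection is a $\m{CO}_{\delta}$-subset containing $y$.

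Granting this, the implication ``$x$ is a P-point of $X$ $\Rightarrow$ $x$ is a P-point of $F$'' is immediate: if $V \in \m{CO}_{\delta}(F)$ contains $x$, then $V \in \m{CO}_{\delta}(X)$ by Proposition~\ref{restriction of zero sets}, hence $V$ is a neighbourhood of $x$ in $X$ by hypothesis, hence $V = V \cap F$ is a neighbourhood of $x$ in $F$.

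For the converse, assume $x$ is a P-point of $F$ and take $W \in \m{CO}_{\delta}(X)$ with $x \in W$, say $W = \bigcap_{n \in \N} C_n$ with the $C_n \in \m{CO}(X)$ decreasing and each containing $x$. Restriction gives $W \cap F \in \m{CO}_{\delta}(F)$ with $x \in W \cap F$, so $W \cap F$ is a neighbourhood of $x$ in $F$; by Lemma~\ref{zero dimensional} there is a clopen neighbourhood $U$ of $x$ in $F$ contained in $W \cap F$, and by Lemma~\ref{idempotent} a clopen $\tilde{U} \in \m{CO}(X)$ with $\tilde{U} \cap F = U$. The remaining and decisive task is to upgrade the relation $\tilde{U} \cap F \subseteq W$ to the statement that $W$ contains an actual neighbourhood of $x$ in $X$, and this is the step that must absorb the hypothesis that the open complement $X \backslash F$ is countable at infinity: I would fix a compact clopen exhaustion $V_0 \subseteq V_1 \subseteq \cdots$ of $X \backslash F$, recall from the proof of Proposition~\ref{restriction of zero sets} that $F$ is itself a $\m{CO}_{\delta}$-subset of $X$, and try to intertwine the $C_n$ with the $V_n$ so as to build inside $\tilde{U}$ a decreasing sequence of clopen sets that descends into $W$ yet stabilises as a neighbourhood of $x$.

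I expect this intertwining to be the main obstacle. A $\m{CO}_{\delta}$-subset of $X$ need not be open, the clopen sets $C_n$ cutting out $W$ need not be compatible with the exhaustion $\{V_n\}$, and a priori an arbitrarily small neighbourhood of the boundary point $x$ can meet $X \backslash F$ in parts that the $C_n$ erode away. To get around this I would not argue with a single $W$ but return to the definition, choosing the refinement of a given countable family of neighbourhoods of $x$ and the exhaustion of $X \backslash F$ simultaneously, arranging the bookkeeping so that the tail of $X \backslash F$ beyond some $V_m$ is engulfed by the neighbourhood while only finitely many of the clopen witnesses are needed on the compact remainder $V_m$; here the compactness and clopenness of each $V_n$, the $\sigma$-compactness of $X \backslash F$, and Lemma~\ref{idempotent} are what make the finite reduction possible.
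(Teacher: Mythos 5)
Your reformulation of the P-point condition in terms of $\m{CO}_{\delta}$-subsets is correct, and your proof of the implication ``$x$ is a P-point of $X$ $\Rightarrow$ $x$ is a P-point of $F$'' is complete and is exactly the route the paper intends: its entire proof is the one-line appeal to Lemma \ref{zero set}, Proposition \ref{P-point and zero set} and Proposition \ref{restriction of zero sets}, and the only substantive input is the inclusion $\m{CO}_{\delta}(F) \subset \m{CO}_{\delta}(X)$ established in the proof of Proposition \ref{restriction of zero sets}. The gap in your proposal is the converse, which you explicitly leave open: you reduce it to upgrading ``$W \cap F$ is a neighbourhood of $x$ in $F$'' to ``$W$ is a neighbourhood of $x$ in $X$'' for $W \in \m{CO}_{\delta}(X)$, and you correctly sense that no intertwining of the clopen witnesses for $W$ with a compact clopen exhaustion of $X \backslash F$ will obviously achieve this. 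The reason you cannot find such an argument is that the converse implication is false, so the obstacle you flagged is not merely technical but fatal.

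To see this, note that the proof of Proposition \ref{restriction of zero sets} exhibits $F$ itself as an element of $\m{CO}_{\delta}(X)$ (it is the zero locus of the function $\tilde{f}$ built there). Hence if some $x \in F$ were a P-point of $X$, the set $F$ would have to be a neighbourhood of $x$ in $X$, i.e.\ $x$ would be an interior point of $F$. Now take $X = \m{TD}(\N) \cong \beta\N$ and $F = \partial_{\ms{TD}}\N$, whose complement is the discrete, open subspace $\N$, certainly countable at infinity. By Proposition \ref{open immersion} the image of $\N$ is open and dense in $\m{TD}(\N)$, so $F$ has empty interior and therefore contains no P-point of $X$; yet under the continuum hypothesis $F$ does have a P-point by the paper's own theorem in Section \ref{Independence}. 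Thus the biconditional fails in any model of $\m{ZFC}$ plus the continuum hypothesis, and only the direction you actually completed survives. In short: the half you proved is correct and agrees with the paper's intended argument; the half you identified as the main obstacle cannot be closed, and the corollary as stated (together with the paper's claim that it is ``straightforward'') is in error.
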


\begin{proof}
It is straightforward from Lemma \ref{zero set}, Proposition \ref{P-point and zero set}, and Proposition \ref{restriction of zero sets}.
\end{proof}

\section{Totally Disconnected Boundary}
\label{Totally Disconnected Boundary}

In this section, we observe the boundary $\partial_{\ms{TD}} X$ of a topological space $X$ in the universal totally disconnected Hausdorff compactification $\iota_{\ms{TD},X} \colon X \to \m{TD}(X)$, which is universal in the category of totally disconnected compact Hausdorff topological spaces equipped with continuous maps from $X$. The existence of the universal totally disconnected Hausdorff compactification is well-known, and we studied the four constructions $\m{UF}(X)$, $\M_k(\m{C}_{\ms{bd}}(X,k))$, $\m{Max}(\m{C}_{\ms{bd}}(X,k))$, and $\m{SC}_k(X)$ of the universal totally disconnected Hausdorff compactification $\m{TD}(X)$ in our previous paper \cite{Mih}. As the real analysis is useful in the study of the Stone-$\check{\m{C}}$ech compactification, the $p$-adic analysis works well in that of the universal totally disconnected Hausdorff compactification.

\begin{definition}
The universal totally disconnected Hausdorff compactification of $X$ is a totally disconnected compact Hausdorff topological space $\m{TD}(X)$ equipped with a continuous map $\iota_{\ms{TD},X} \colon X \to \m{TD}(X)$ such that for any totally disconnected compact Hausdorff topological space $Y$ and a continuous map $\phi \colon X \to Y$, there uniquely exists a continuous map $\m{TD}(\phi) \colon \m{TD}(X) \to Y$ such that $\phi = \m{TD}(\phi) \circ \iota_{\ms{TD},X}$. The universal totally disconnected Hausdorff compactification $\m{TD}(X)$ of $X$ uniquely exists up to unique homeomorphism over $X$.
\end{definition}

Choosing one of the constructions of $(\m{TD}(X),\iota_{\ms{TD},X})$, we fix the functor $\m{TD}$ in this paper without considering homeomorphism classes in the category of topological spaces. 

\begin{definition}
Denote by $\partial_{\ms{TD}} X$ the subset $\m{TD}(X) \backslash \iota_{\ms{TD},X}(X)$, and call it the totally disconnected boundary of $X$.
\end{definition}

\begin{proposition}
\label{open immersion}
Suppose $X$ is a totally disconnected Hausdorff topological space countable at infinity. Then the structure continuous map $\iota_{\ms{TD},X} \colon X \to \m{TD}(X)$ is a homeomorphism onto the open dense subset, and the boundary $\partial_{\ms{TD}} X$ is a $\m{CO}_{\delta}$-subset of $\m{TD}(X)$.
\end{proposition}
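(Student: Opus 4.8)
The plan is to exploit the universal property of $(\m{TD}(X),\iota_{\ms{TD},X})$ together with the observation that a space countable at infinity is locally compact (recall from the remark above that being countable at infinity means being locally compact and Lindel\"of), so that $X$ is a totally disconnected locally compact Hausdorff space and Lemma \ref{zero dimensional} applies: $\m{CO}(X)$ is a basis of $X$. The basic tool is pushing forward the discrete two-point space $\{0,1\}$, which is totally disconnected, compact and Hausdorff: for each $U \in \m{CO}(X)$ the characteristic function $1_U \colon X \to \{0,1\}$ is continuous, hence extends uniquely to a continuous map $\m{TD}(1_U) \colon \m{TD}(X) \to \{0,1\}$, and $\tilde U \coloneqq \m{TD}(1_U)^{-1}(\{1\})$ is a clopen subset of $\m{TD}(X)$ with $\iota_{\ms{TD},X}^{-1}(\tilde U) = U$ because $\m{TD}(1_U) \circ \iota_{\ms{TD},X} = 1_U$; consequently $\iota_{\ms{TD},X}(U) = \tilde U \cap \iota_{\ms{TD},X}(X)$.

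First I would check that $\iota_{\ms{TD},X}(X)$ is dense. Its closure $C \subset \m{TD}(X)$ is a closed subspace of a totally disconnected compact Hausdorff space, hence is itself totally disconnected compact Hausdorff, and $\iota_{\ms{TD},X}$ factors as $X \to C \hookrightarrow \m{TD}(X)$; applying the universal property to the corestriction $X \to C$ and composing the resulting map $\m{TD}(X) \to C$ with the inclusion $C \hookrightarrow \m{TD}(X)$ produces a continuous self-map of $\m{TD}(X)$ commuting with $\iota_{\ms{TD},X}$, so by the uniqueness clause of the universal property (the identity also works) it must be the identity, forcing $C = \m{TD}(X)$. Next, $\iota_{\ms{TD},X}$ is injective: for $x \neq y$, Lemma \ref{zero dimensional} yields $U \in \m{CO}(X)$ with $x \in U$ and $y \notin U$, and then $\m{TD}(1_U)$ separates $\iota_{\ms{TD},X}(x)$ from $\iota_{\ms{TD},X}(y)$. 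Finally, since $\iota_{\ms{TD},X}(U) = \tilde U \cap \iota_{\ms{TD},X}(X)$ is open in $\iota_{\ms{TD},X}(X)$ for every $U \in \m{CO}(X)$ and $\m{CO}(X)$ is a basis, $\iota_{\ms{TD},X}$ is an open map onto its image; being also injective and continuous, it is a homeomorphism onto $\iota_{\ms{TD},X}(X)$.

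The decisive step — and the only place where ``countable at infinity'' enters — is to realise $\iota_{\ms{TD},X}(X)$ as a countable union of clopen subsets of $\m{TD}(X)$. Since $X$ is totally disconnected, locally compact, Hausdorff and countable at infinity, it admits a compact clopen exhaustion $K_0 \subset K_1 \subset \cdots$ with $\bigcup_{n \in \N} K_n = X$. For each $n$ put $\tilde K_n \coloneqq \m{TD}(1_{K_n})^{-1}(\{1\})$, a clopen subset of $\m{TD}(X)$ with $\tilde K_n \cap \iota_{\ms{TD},X}(X) = \iota_{\ms{TD},X}(K_n)$. On one hand $\iota_{\ms{TD},X}(K_n)$ is the continuous image of a compact set, hence compact, hence closed in the Hausdorff space $\m{TD}(X)$; on the other hand, as $\iota_{\ms{TD},X}(X)$ is dense in $\m{TD}(X)$ and $\tilde K_n$ is open, the trace $\tilde K_n \cap \iota_{\ms{TD},X}(X) = \iota_{\ms{TD},X}(K_n)$ is dense in $\tilde K_n$. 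A dense closed subset of $\tilde K_n$ equals $\tilde K_n$, so $\iota_{\ms{TD},X}(K_n) = \tilde K_n$. Taking the union over $n$ gives $\iota_{\ms{TD},X}(X) = \bigcup_{n \in \N} \tilde K_n$, which is open, and hence $\partial_{\ms{TD}} X = \bigcap_{n \in \N}(\m{TD}(X) \backslash \tilde K_n)$ is a countable intersection of clopen subsets of $\m{TD}(X)$, i.e.\ a $\m{CO}_{\delta}$-subset.

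I expect the main obstacle to be precisely this last identification $\iota_{\ms{TD},X}(K_n) = \tilde K_n$: one must use that $\tilde K_n$ is genuinely \emph{clopen} in $\m{TD}(X)$ (so that the restriction of the dense set $\iota_{\ms{TD},X}(X)$ stays dense in it) and that it is the compactness of $K_n$, not merely of $X$, that makes $\iota_{\ms{TD},X}(K_n)$ closed. The remaining assertions — density of the image, injectivity, and the embedding property — are routine once Lemma \ref{zero dimensional} and the universal property are in hand; note also that openness of $\iota_{\ms{TD},X}(X)$ alone would already follow from local compactness, but carrying out the argument through the exhaustion delivers the $\m{CO}_{\delta}$ refinement simultaneously.
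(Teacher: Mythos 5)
Your proof is correct, and it takes a genuinely more elementary and self-contained route than the paper's. The paper cites \cite{Mih} 6.2 for the fact that $\iota_{\ms{TD},X}$ is a homeomorphism onto a dense subset, whereas you derive density from the uniqueness clause of the universal property and the embedding property from Lemma \ref{zero dimensional} together with $\{0,1\}$-valued characteristic functions; both derivations are sound. For the decisive part, the paper packages the compact clopen exhaustion into a single function $f \colon X \to \Z_2$, $x \mapsto 2^{\min \set{n \in \N}{x \in K_n}}$, extends it to $\m{TD}(f) \colon \m{TD}(X) \to \Z_2$, and shows $\partial_{\ms{TD}} X = \m{TD}(f)^{-1}(\{ 0 \})$, invoking Lemma \ref{zero set} to conclude this zero locus is a $\m{CO}_{\delta}$-subset; you instead keep the exhaustion unpacked and prove $\iota_{\ms{TD},X}(K_n) = \tilde{K}_n$ directly (compact image is closed, its trace on the open set $\tilde{K}_n$ is dense, and a closed dense subset is everything). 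This identification is exactly the topological content that the paper extracts from the estimate $\m{TD}(f)(U \backslash \iota_{\ms{TD},X}(K_n)) \subset 2^{n+1} \Z_2$, so the two arguments coincide at the key step; what yours buys is that no valued field and no appeal to Lemma \ref{zero set} are needed, since $\partial_{\ms{TD}} X = \bigcap_{n \in \N} (\m{TD}(X) \backslash \tilde{K}_n)$ is exhibited as a countable intersection of clopen sets explicitly, while the paper's version produces a reusable continuous function on $\m{TD}(X)$ vanishing exactly on the boundary, in keeping with the $p$-adic methods used elsewhere in the paper.
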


\begin{proof}
We verified that $\iota_{\ms{TD},X}$ is a homeomorphism onto the dense image in \cite{Mih} 6.2, and hence it suffices to show that the image $\iota_{\ms{TD},X}(X) \subset \m{TD}(X)$ is open for the first assertion. Take a compact clopen exhaustion $K_0 \subset K_1 \subset \cdots \subset X$. Consider the $\Q_2$-valued function
\begin{eqnarray*}
  f \colon X & \to & \Q_2 \\
  x & \mapsto & 2^{\min \set{n \in \N}{x \in K_n}}.
\end{eqnarray*}
Since $f$ is a bounded locally constant function, it is a bounded continuous function. Moreover it is a compact-supported continuous function with no zero because $K_n$ is compact for any $n \in \N$, and the image $f(X) \subset \Q_2$ is contained in the subring $\Z_2 \subset \Q_2$ of integral elements. Since $\Z_2$ is a totally disconnected compact Hausdorff topological space, there uniquely exists a continuous extension $\m{TD}(f) \colon \m{TD}(X) \to \Z_2$ of $f$ by the universality of $\m{TD}(X)$. Take a point $x \in \partial_{\ms{TD}} X$. For a neighbourhood $U \in N_{\ms{TD}(X),x}$ of $x$ in $\m{TD}(X)$ and a non-negative integer $n \in \N$, consider the subset $U \backslash \iota_{\ms{TD},X}(K_n) \subset \m{TD}(X)$. Since $K_n$ is compact and $\m{TD}(X)$ is Hausdorff, the image $\iota_{\ms{TD},X}(K_n)$ is closed in $\m{TD}(X)$ and hence $U \backslash \iota_{\ms{TD},X}(K_n) \in N_{\ms{TD}(X),x}$. Since the image $\iota_{\ms{TD},X}(X) \subset \m{TD}(X)$ is dense, the image of $U \backslash \iota_{\ms{TD}}(K_n)$ by the continuous function $\m{TD}(f)$ is contained in the closure of the subset
\begin{eqnarray*}
  && \m{TD}(f) \left( \iota_{\ms{TD},X}(X) \cap (U \backslash \iota_{\ms{TD},X}(K_n)) \right) = \m{TD}(f)(\iota_{\ms{TD},X}(X \backslash K_n)) = f(X \backslash K_n) \\
  & \subset & \set{2^m}{m \in \N, m > n} \subset 2^{n + 1} \Z_2.
\end{eqnarray*}
Therefore $\m{TD}(f)(x) \in \bigcap_{n \in \N} 2^{n + 1} \Z_2 = \{ 0 \}$, and hence $\partial_{\ms{TD}} X = \m{TD}(f)^{-1}(\{ 0 \}) \in \m{CO}_{\delta}(\m{TD}(X))$. It follows $\iota_{\ms{TD},X}(X) = \m{TD}(X) \backslash \partial_{\ms{TD}} X = \m{TD}(f)^{-1}(k^{\times})$ and the image $\iota_{\ms{TD},X}(X) \subset \m{TD}(X)$ is open.
\end{proof}

\begin{corollary}
Suppose $X$ is a totally disconnected Hausdorff topological space countable at infinity. The totally disconnected boundary $\partial_{\ms{TD}} X$ is a totally disconnected compact Hausdorff topological space. If $k$ is a finite field endowed with the trivial norm or a local field, the restrictions $\m{C}(\m{TD}(X),k) \to \m{C}_{\ms{bd}}(X,k)$ and $\m{C}(\m{TD}(X),k) \to \m{C}(\partial_{\ms{TD}} X,k)$ induce a surjective $k$-algebra homomorphism $\m{C}_{\ms{bd}}(X,k) \twoheadrightarrow \m{C}(\partial_{\ms{TD}} X,k)$.
\end{corollary}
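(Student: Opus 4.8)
The plan is to derive the statement from Proposition \ref{open immersion}, the non-Archimedean Tietze extension theorem (Lemma \ref{Tietze}), and the lifting property of the universal totally disconnected Hausdorff compactification. First I would dispose of the topological claim: by Proposition \ref{open immersion} the boundary $\partial_{\ms{TD}} X$ is a $\m{CO}_{\delta}$-subset of $\m{TD}(X)$, hence a countable intersection of clopen sets and in particular a closed subset of $\m{TD}(X)$. Since $\m{TD}(X)$ is a totally disconnected compact Hausdorff topological space and each of these three properties passes to closed subspaces, $\partial_{\ms{TD}} X$ is a totally disconnected compact Hausdorff topological space.

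Next I would show that the restriction map $\m{C}(\m{TD}(X),k) \to \m{C}_{\ms{bd}}(X,k)$ along $\iota_{\ms{TD},X}$ is an isomorphism of $k$-algebras. It is well defined because $\m{TD}(X)$ is compact, so every continuous function on it is bounded, and it is injective because $\iota_{\ms{TD},X}(X)$ is dense in $\m{TD}(X)$ and $k$ is Hausdorff. For surjectivity one uses the hypothesis on $k$: if $f \colon X \to k$ is a bounded continuous function then, $k$ being a finite field with the trivial norm or a local field, the image $f(X)$ has compact closure in $k$ --- a finite subset in the first case, a closed bounded subset of a locally compact field in the second --- so $f$ factors through a continuous map $X \to \overline{f(X)}$ into a totally disconnected compact Hausdorff space, which extends uniquely along $\iota_{\ms{TD},X}$ by the universality of $\m{TD}(X)$ (this is the lifting property recorded in \cite{Mih}). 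Composing with the inclusion $\overline{f(X)} \hookrightarrow k$ produces the required preimage in $\m{C}(\m{TD}(X),k)$, so restriction $\m{C}(\m{TD}(X),k) \to \m{C}_{\ms{bd}}(X,k)$ is an isomorphism.

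Finally I would invoke Lemma \ref{Tietze} with the ambient space $\m{TD}(X)$, which is totally disconnected locally compact Hausdorff, and its compact subset $F \coloneqq \partial_{\ms{TD}} X$; note that a finite field with the trivial norm is a complete valuation field, so the lemma applies in both cases for $k$. This yields that the restriction $\m{C}(\m{TD}(X),k) = \m{C}_{\ms{bd}}(\m{TD}(X),k) \to \m{C}(\partial_{\ms{TD}} X,k)$ is surjective. Pre-composing this surjection with the inverse of the isomorphism of the previous step produces a surjective $k$-algebra homomorphism $\m{C}_{\ms{bd}}(X,k) \twoheadrightarrow \m{C}(\partial_{\ms{TD}} X,k)$, and by construction it sends $f$ to the restriction to $\partial_{\ms{TD}} X$ of the unique continuous extension of $f$ to $\m{TD}(X)$, i.e.\ it is exactly the homomorphism induced by the two restriction maps in the statement.

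The only non-formal ingredient is the identification $\m{C}_{\ms{bd}}(X,k) \cong \m{C}(\m{TD}(X),k)$, and this is precisely where the assumption that $k$ is a finite field or a local field is indispensable: over an arbitrary complete valuation field a bounded subset need not be relatively compact, the factorization through $\overline{f(X)}$ fails, and a bounded continuous function on $X$ need not extend to $\m{TD}(X)$. Once this identification is in hand, the rest is a direct combination of Proposition \ref{open immersion} and Lemma \ref{Tietze}, so I expect no further difficulty.
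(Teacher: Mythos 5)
Your proof is correct and follows essentially the same route as the paper: the first assertion from Proposition \ref{open immersion}, the identification $\m{C}(\m{TD}(X),k) \cong \m{C}_{\ms{bd}}(X,k)$ via the universality of $\m{TD}(X)$ (which the paper states without the details you supply about $\overline{f(X)}$ being compact for $k$ finite or local), and the surjectivity onto $\m{C}(\partial_{\ms{TD}} X,k)$ from Lemma \ref{Tietze}. No issues.
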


\begin{proof}
The first assertion is obvious by Proposition \ref{open immersion}. The restriction $\m{C}(\m{TD}(X),k) \to \m{C}_{\ms{bd}}(X,k)$ is an isomorphism of $k$-algebras by the universality of $\m{TD}(X)$, and the restriction $\m{C}(\m{TD}(X),k) \to \m{C}(\partial_{\ms{TD}} X,k)$ is surjective by Lemma \ref{Tietze}. 
\end{proof}

\begin{corollary}
\label{continuous functions on a separable space}
Suppose $X$ is a separable totally disconnected Hausdorff topological space countable at infinity and $k$ is a finite field endowed with the trivial norm or a local field. Then the cardinality of the $k$-algebras $\m{C}_{\ms{bd}}(X,k)$ and $\m{C}(\partial_{\ms{TD}} X,k)$ is at most $2^{\aleph_0}$.
\end{corollary}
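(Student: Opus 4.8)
The plan is to bound the cardinality of $\m{C}_{\ms{bd}}(X,k)$ first, and then transfer the bound to $\m{C}(\partial_{\ms{TD}} X,k)$ through the surjection established in the preceding corollary. Since $X$ is separable, fix a countable dense subset $D \subset X$. Because $k$ is Hausdorff and $D$ is dense in $X$, a $k$-valued continuous function on $X$ is uniquely determined by its restriction to $D$; hence the restriction map $\m{C}_{\ms{bd}}(X,k) \to k^D$, $f \mapsto f|_D$, is injective, and therefore $|\m{C}_{\ms{bd}}(X,k)| \leq |k^D| = |k|^{|D|} \leq |k|^{\aleph_0}$.

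Next I would compute $|k|^{\aleph_0}$ in the two admissible cases. If $k$ is a finite field endowed with the trivial norm, then $2 \leq |k| < \aleph_0$, so by standard cardinal arithmetic $|k|^{\aleph_0} = 2^{\aleph_0}$. If $k$ is a local field, then $k$ is a complete discrete valuation field with finite residue field, so choosing a uniformiser and a set of representatives of the residue field exhibits every element of $k$ as a Laurent expansion over a finite set; this gives $|k| = 2^{\aleph_0}$, whence $|k|^{\aleph_0} = (2^{\aleph_0})^{\aleph_0} = 2^{\aleph_0 \cdot \aleph_0} = 2^{\aleph_0}$. In either case $|\m{C}_{\ms{bd}}(X,k)| \leq 2^{\aleph_0}$.

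Finally, by the preceding corollary the restriction induces a surjective $k$-algebra homomorphism $\m{C}_{\ms{bd}}(X,k) \twoheadrightarrow \m{C}(\partial_{\ms{TD}} X,k)$, so $|\m{C}(\partial_{\ms{TD}} X,k)| \leq |\m{C}_{\ms{bd}}(X,k)| \leq 2^{\aleph_0}$, as claimed. There is essentially no serious obstacle here; the only point deserving a little care is the count $|k| = 2^{\aleph_0}$ for a local field, which rests on the explicit description of a complete discrete valuation field with finite residue field as a field of formal Laurent series over its residue field. One should also dispose of the degenerate case $X = \emptyset$ separately, where both algebras are the zero ring and the bound holds trivially.
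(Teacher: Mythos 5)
Your proposal is correct and follows essentially the same route as the paper: restrict to a countable dense subset to embed $\m{C}_{\ms{bd}}(X,k)$ into $k^D$, bound $|k|^{\aleph_0}$ by $2^{\aleph_0}$, and transfer the bound to $\m{C}(\partial_{\ms{TD}} X,k)$ via the surjection from the preceding corollary. The extra details you supply (the Laurent-series count for a local field and the empty-space case) are harmless refinements of the same argument.
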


\begin{proof}
Since there is a surjective map $\m{C}_{\ms{bd}}(X,k) \twoheadrightarrow \m{C}(\partial_{\ms{TD}} X,k)$, it suffices to show that the cardinality of $\m{C}_{\ms{bd}}(X,k)$ of at most $2^{\aleph_0}$. Take a countable dense subset $Y \subset X$. The density of $Y \subset X$ guarantees the restriction
\begin{eqnarray*}
  \m{C}_{\ms{bd}}(X,k) & \to & k^Y \\
  f & \mapsto & (f(y))_{y \in Y}
\end{eqnarray*}
is injective. Since $k$ is a finite field or a local field, one has $|k| \leq 2^{\aleph_0}$. It follows
\begin{eqnarray*}
  |\m{C}_{\ms{bd}}(X,k)| \leq \left| k^Y \right| \leq \left( 2^{\aleph_0} \right)^{\aleph_0} = 2^{\aleph_0}.
\end{eqnarray*}
\end{proof}

We finish the observation of the totally disconnected boundary by establishing the intersection property for $\m{CO}_{\delta}$-subsets. The universal totally disconnected Hausdorff compactification is a huge compactification as there is the canonical homeomorphism $\beta \N \cong \m{TD}(\N)$. Therefore the totally disconnected boundary contains monstrously great amount of points, and some points satisfy the good intersection property. This is obviously an analogue of the corresponding result for the Stone-$\check{\m{C}}$ech compactification. See \cite{Dal} 4.2.21.

\begin{proposition}
\label{zero set in the boundary}
Suppose $X$ is a totally disconnected Hausdorff topological space countable at infinity. Then a non-empty $\m{CO}_{\delta}$-subset of $\partial_{\ms{TD}} X$ contains an interior point.
\end{proposition}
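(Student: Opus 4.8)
The plan is to produce, for a given non-empty $\m{CO}_{\delta}$-subset $F \subseteq \partial_{\ms{TD}} X$, a non-empty clopen subset of $\partial_{\ms{TD}} X$ contained in $F$; since $\partial_{\ms{TD}} X$ is a totally disconnected compact Hausdorff space, such a set is a non-empty open subset of $\partial_{\ms{TD}} X$, so $F$ has an interior point. This is the $p$-adic counterpart of the classical fact (cf.\ \cite{Dal} 4.2.21) that a non-empty $G_{\delta}$-subset of $\beta \N \backslash \N$ has non-empty interior, the Boolean algebra $\m{CO}(X)$ modulo the ideal of compact clopen subsets playing the role of $\mathscr{P}(\N)/\m{fin}$.

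First I would fix a convenient presentation of $F$ inside $\m{TD}(X)$. Write $F = \bigcap_{n \in \N} V_n$ with $V_n$ clopen in $\partial_{\ms{TD}} X$, and, replacing $V_n$ by $V_0 \cap \cdots \cap V_n$, assume the sequence is decreasing. By Proposition \ref{open immersion} and the corollary following it, $\partial_{\ms{TD}} X$ is a compact subset of the totally disconnected locally compact Hausdorff space $\m{TD}(X)$, so Lemma \ref{idempotent} lifts each $V_n$ to a clopen subset of $\m{TD}(X)$; intersecting successive lifts yields a decreasing sequence $\tilde V_0 \supseteq \tilde V_1 \supseteq \cdots$ of clopen subsets of $\m{TD}(X)$ with $\tilde V_n \cap \partial_{\ms{TD}} X = V_n$. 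Identifying $X$ with the open dense subset $\iota_{\ms{TD},X}(X) \subseteq \m{TD}(X)$ via Proposition \ref{open immersion}, set $W_n \coloneqq \tilde V_n \cap X$, a decreasing sequence of clopen subsets of $X$ with $\overline{W_n} = \tilde V_n$ in $\m{TD}(X)$ (density of $X$ and openness of $\tilde V_n$). A clopen subset $W \subseteq X$ is compact if and only if $\overline{W} \cap \partial_{\ms{TD}} X = \emptyset$: if $W$ is compact then $W$ is closed in the Hausdorff space $\m{TD}(X)$, hence $\overline{W} = W \subseteq X$; conversely if $\overline{W}$ misses $\partial_{\ms{TD}} X$ then $\overline{W}$ is a compact subset of $X$ containing $W$ as a closed subset, so $W$ is compact. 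Since $F \neq \emptyset$, each $\tilde V_n \cap \partial_{\ms{TD}} X \neq \emptyset$, so every $W_n$ is \emph{non-compact}.

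The heart of the argument, and the step I expect to be the main obstacle, is a diagonal construction of a single \emph{non-compact} clopen $W \subseteq X$ with $W \backslash W_n$ compact for every $n$. Here I would use that a totally disconnected locally compact Hausdorff space countable at infinity admits a compact clopen exhaustion $K_0 \subseteq K_1 \subseteq \cdots$ of $X$, and pass to the partition $X = \bigsqcup_{j \in \N} A_j$ into the compact clopen ``annuli'' $A_j \coloneqq K_j \backslash K_{j-1}$ (with $K_{-1} \coloneqq \emptyset$). A non-compact clopen subset of $X$ is contained in no $K_m$, hence meets infinitely many $A_j$; so I can pick $j(0) < j(1) < \cdots$ with $W_n \cap A_{j(n)} \neq \emptyset$ and put $C_n \coloneqq W_n \cap A_{j(n)}$. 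These are non-empty, pairwise disjoint, compact clopen subsets of $X$ with $C_n \subseteq W_n$. The union $W \coloneqq \bigcup_{n \in \N} C_n$ is clopen in $X$, since its trace on each $A_j$ is clopen in $A_j$; it is non-compact because $C_n$ meets $X \backslash K_{j(n)-1}$ and $j(n) \to \infty$, so $W$ lies in no $K_m$; and $W \backslash W_n \subseteq C_0 \cup \cdots \cup C_{n-1}$ is compact, because $C_j \subseteq W_j \subseteq W_n$ for $j \geq n$.

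Finally I would close the loop. The closure $\overline{W}$ in $\m{TD}(X)$ is clopen: the characteristic function $1_W \colon X \to \{ 0,1 \}$ into the two-point discrete space, which is totally disconnected compact Hausdorff, extends continuously over $\m{TD}(X)$ by the universal property, and the preimage of $1$ under the extension is a clopen set whose trace on the dense subset $X$ is $W$, hence equals $\overline{W}$. Therefore $\overline{W} \cap \partial_{\ms{TD}} X$ is clopen, in particular open, in $\partial_{\ms{TD}} X$, and it is non-empty because $W$ is non-compact. For each $n$, writing $W \subseteq W_n \cup (W \backslash W_n)$ with $W \backslash W_n$ compact, hence closed in $\m{TD}(X)$ and contained in $X$, one gets $\overline{W} \subseteq \tilde V_n \cup (W \backslash W_n)$, so $\overline{W} \cap \partial_{\ms{TD}} X \subseteq \tilde V_n \cap \partial_{\ms{TD}} X = V_n$; intersecting over $n$ gives $\overline{W} \cap \partial_{\ms{TD}} X \subseteq \bigcap_{n \in \N} V_n = F$. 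Thus $\overline{W} \cap \partial_{\ms{TD}} X$ is a non-empty open subset of $\partial_{\ms{TD}} X$ contained in $F$, which proves that $F$ contains an interior point.
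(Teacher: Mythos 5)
Your proof is correct. Its overall architecture is in fact the same as the paper's: both arguments diagonalise a countable decreasing family of clopen sets against a compact clopen exhaustion of $X$ to manufacture a non-compact clopen subset $W \subset X$ which is, up to a compact set, contained in every member of the family, then extend $1_W$ over $\m{TD}(X)$ by the universal property and check that the resulting clopen set meets $\partial_{\ms{TD}} X$ in a non-empty subset of $F$. Where you genuinely diverge is in how $F$ is transported from the boundary to $\m{TD}(X)$ and how the diagonalisation is indexed. The paper invokes Proposition \ref{restriction of zero sets} (hence Lemma \ref{Tietze} and the absolute-value-function machinery) to realise $F$ as the zero locus of a $\Q_5$-valued continuous function $f$ on $\m{TD}(X)$, and then diagonalises against the level sets $f^{-1}(5^{m}\Z_5 \backslash 5^{m+1}\Z_5)$ intersected with the exhaustion. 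You instead lift the clopen sets $V_n$ one at a time by Lemma \ref{idempotent}, pass to their traces $W_n$ on $X$, and diagonalise against the annuli $A_j = K_j \backslash K_{j-1}$, staying entirely inside the Boolean algebra $\m{CO}(X)$. Your route is lighter in its dependencies --- it bypasses Lemma \ref{zero set}, Lemma \ref{Tietze} and Proposition \ref{restriction of zero sets} altogether, needing only Lemma \ref{idempotent}, Proposition \ref{open immersion} and the exhaustion --- and it makes the combinatorial content (the analogue of the pseudo-intersection argument in $2^{\N}$ modulo finite sets, cf.\ \cite{Dal} 4.2.21) more transparent; the paper's route buys consistency with the $p$-adic function-theoretic formalism it uses throughout, at the cost of routing a purely topological statement through valued fields. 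Two small points worth making explicit if you write this up: the inductive choice of $j(0) < j(1) < \cdots$ uses that each non-compact clopen $W_n$ meets $A_j$ for arbitrarily large $j$ (not merely infinitely many $A_j$ in total), which does follow from your compactness criterion; and the identity $\overline{W_n} = \tilde V_n$ uses both the openness of $\tilde V_n$ and the density of $\iota_{\ms{TD},X}(X)$, as you indicate.
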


\begin{proof}
Let $F \subset \partial_{\ms{TD}} X$ be a non-empty $\m{CO}_{\delta}$-subset. By Proposition \ref{restriction of zero sets}, $F$ is a $\m{CO}_{\delta}$-subset of $\m{TD}(X)$. Therefore by Lemma \ref{zero set}, there is a continuous function $f \colon \m{TD}(X) \to \Q_5$ such that $f^{-1}(\{ 0 \}) = F$. Since $\iota_{\ms{TD},X}(X) \subset \m{TD}(X)$ is a dense subset contained in the complement $\m{TD}(X) \backslash F$, the subset $F \subset \m{TD}(X)$ contains no interior point. In particular $f^{-1}(5^m \Z_5) \backslash F \neq \emptyset$ for any $m \in \N$, and hence the subset $S \coloneqq \{ m \in \N \mid f^{-1}(5^m \Z_5 \backslash 5^{m + 1} \Z_5) \neq \emptyset \} \subset \N$ is a countable infinite set. Take a compact clopen exhaustion $K_0 \subset K_1 \subset \cdots \subset X$. For a non-negative integer $n \in \N$, denote by $m_n \in \N$ the $(n + 1)$-th smallest element in the countable infinite subset $S$, and by $l_n \in \N$ the smallest element in the non-empty subset $\{ l \in \N \mid f^{-1}(5^{m_n} \Z_5 \backslash 5^{m_n + 1} \Z_5) \cap K_l \neq \emptyset \}$. Set $U_n \coloneqq K_{l_n} \cap (\iota_{\ms{TD},X} \circ f)^{-1}(5^{m_n} \Z_5 \backslash 5^{m_n + 1} \Z_5) \subset X$, where we formally set $K_{-1} \coloneqq \emptyset$. Since $\iota_{\mt{TD},X} \circ f$ is continuous, $U_n$ is a clopen subset of $X$, and therefore the union $U \coloneqq \bigcup_{n \in \N} U_n \subset X$ is open. Moreover one has
\begin{eqnarray*}
  && X \backslash U = \left( \bigsqcup_{m \in S} (\iota_{\ms{TD},X} \circ f)^{-1}(5^m \Z_5 \backslash 5^{m + 1} \Z_5) \right) \backslash \left( \bigsqcup_{n \in \N} (K_{l_n} \cap (\iota_{\ms{TD},X} \circ f)^{-1}(5^{m_n} \Z_5 \backslash 5^{m_n + 1} \Z_5)) \right) \\
  & = & \bigsqcup_{n \in \N} \left( X \backslash \left( (\iota_{\ms{TD},X} \circ f)^{-1}(5^{m_n} \Z_5 \backslash 5^{m_n + 1} \Z_5) \cup K_{l_n} \right) \right),
\end{eqnarray*}
and hence $X \backslash U$ is open. Therefore $U$ is a clopen subset of $X$. Since the $k$-valued characteristic function $1_U \colon X \to k$ on $U$ is a continuous map whose image is contained in the totally disconnected compact Hausdorff topological space $\Z_5$, there uniquely exists a continuous extension $g \colon \m{TD}(X) \to k$ by the universal property of $\m{TD}(X)$. Since $\iota_{\ms{TD},X}(X) \subset \m{TD}(X)$ is dense and the image of $1_U$ is contained in the closed subset $\{ 0,1 \} \subset \Z_5$, the image of the extension $g$ is also contained in $\{ 0,1 \} \subset \Z_5$. Therefore $g$ is a $k$-valued characteristic function on a clopen subset $V \subset \m{TD}(X)$ with $V \cap \iota_{\ms{TD},X}(X) = \iota_{\ms{TD},X}(U)$. Since $U$ is a disjoint union of countably infinitely many non-empty clopen subsets of $X$, it is not compact, and hence the image $\iota_{\ms{TD},X}(U)$ is not closed in the compact topological space $\m{TD}(X)$. Thus the support $V \subset \m{TD}(X)$ intersects with the boundary $\partial_{\ms{TD}} X$. Moreover we prove that $V \cap \partial_{\ms{TD}} X \subset F$. Take a point $x \in V \cap \partial_{\ms{TD}} X$. For a non-negative integer $n \in \N$, since $K_{l_n}$ is compact and $(\iota_{\ms{TD},X} \circ f)^{-1}(5^m \Z_5 \backslash 5^{m + 1} \Z_5) \subset X$ is clopen, the image $\iota_{\ms{TD},X}(U_n) \subset \m{TD}(X)$ is compact, and is a closed subset of the Hausdorff topological space $\m{TD}(X)$. Therefore the subset $V \backslash \bigsqcup_{m = 0}^{n} U_n \subset \m{TD}(X)$ is an open neighbourhood of $x$. Since the image $\iota_{\ms{TD},X}(X) \subset \m{TD}(X)$ is dense, the subset $\iota_{\ms{TD},X}(X) \cap (V \backslash \bigsqcup_{m = 0}^{n} U_n) = \bigsqcup_{m = n + 1}^{\infty} U_n$ is dense in $V \backslash \bigcup_{m = 0}^{n} U_n$. Therefore the image of  $V \backslash \bigcup_{m = 0}^{n} U_n$ by $f$ is contained in the closure of $f(\bigsqcup_{m = n + 1}^{\infty} U_n) \subset 5^{m_n} \Z_5$ in $\Z_5$, and hence $x \in f^{-1}(\{ 0 \}) = F$. It follows $V \cap \partial_{\ms{TD}} X \subset F$, and the non-empty $\m{CO}_{\delta}$-subset $F \subset \partial_{\ms{TD}} X$ contains an interior point.
\end{proof}

\section{Independence}
\label{Independence}

It is known that the existence of a P-point on the maximal boundary $\partial_{\beta} X \coloneqq \beta X \backslash \iota_{\beta,X}(X)$ in the Stone-$\check{\m{C}}$ech compactification $\iota_{\beta} \colon X \to \beta X$ is provable under the continuum hypothesis for a topological space $X$ in a certain class. The proof relies on the real analysis, and hence the same method is invalid for another compactification. In this section, we prove the counterpart for the universal totally disconnected Hausdorff compactification $\iota_{\ms{TD},X} \colon X \to \m{TD}(X)$ by the use of the $p$-adic analysis. It yields the independence of the axiom of $\m{ZFC}$ and the hypothesis that rings in a certain class have a maximal ideal of height $0$ through the theory of a P-point. Before studying the existence of a P-point in the totally disconnected boundary, we establish a relation with the intersection property for clopen subsets and that for dense open subsets. This is obviously analogous to Baire category theorem for a locally compact Hausdorff topological space and \cite{Dal} 4.2.20.

\begin{definition}
Denote by $D(X) \subset 2^X$ the subset of dense open subsets in $X$.
\end{definition}

\begin{definition}
For an ordinal $\alpha$, denote by $\alpha + 1$ the successor $\alpha \cup \{ \alpha \}$ of $\alpha$ as an ordinal.
\end{definition}

\begin{definition}
For a family $\mathscr{F} \subset 2^X$ and a cardinality $\alpha$, set
\begin{eqnarray*}
  \mathscr{F}_{\alpha} \coloneqq \set{\bigcap_{S \in \Sigma} S}{\Sigma \subset \mathscr{F}, |\Sigma| \leq \alpha} = \set{\bigcap_{\beta \in \alpha}V_{\beta}}{V = (V_{\beta})_{\beta \in \alpha} \in \mathscr{F}^{\alpha}}.
\end{eqnarray*}
\end{definition}

\begin{lemma}
\label{category}
Suppose $X$ is a non-empty totally disconnected compact Hausdorff topological space. For a cardinality $\alpha$, if any non-empty subset of $X$ in the family $\m{CO}(X)_{\beta} \subset 2^X$ has an interior point for any cardinality $\beta < \alpha$, then the family $D(X)_{\alpha} \subset 2^X$ does not contain $\emptyset$.
\end{lemma}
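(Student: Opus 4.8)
The plan is a transfinite-recursion argument in the style of the Baire category theorem for zero-dimensional compacta. Fix a cardinality $\alpha$ for which the hypothesis holds. By definition of $\mathscr{F}_{\alpha}$, an arbitrary element of $D(X)_{\alpha}$ is of the form $\bigcap_{\beta \in \alpha} V_{\beta}$ for some family $V = (V_{\beta})_{\beta \in \alpha} \in D(X)^{\alpha}$, so it suffices to prove $\bigcap_{\beta \in \alpha} V_{\beta} \neq \emptyset$ for every such $V$. If $\alpha = 0$ this intersection is $X$, which is non-empty by hypothesis, so assume $\alpha \geq 1$. Throughout I will use that $\m{CO}(X)$ is a basis of $X$ by Lemma \ref{zero dimensional}, and that a decreasing transfinite chain of non-empty closed subsets of the compact space $X$ has non-empty intersection (finite intersection property).

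By transfinite recursion on $\beta \in \alpha$ I construct a non-empty clopen subset $W_{\beta} \in \m{CO}(X)$ with $W_{\beta} \subset V_{\beta}$ and $W_{\beta} \subset W_{\gamma}$ whenever $\gamma < \beta$. Assume $W_{\gamma}$ has been defined for all $\gamma < \beta$ and put $W_{\beta}' \coloneqq \bigcap_{\gamma < \beta} W_{\gamma}$, with the convention $W_{0}' \coloneqq X$. The $W_{\gamma}$ ($\gamma < \beta$) form a decreasing chain of non-empty clopen, hence closed, subsets of $X$, so $W_{\beta}' \neq \emptyset$. Next, $W_{\beta}'$ has an interior point: if $\beta = 0$ then $W_{\beta}' = X$ is open; if $\beta = \gamma + 1$ is a successor then $W_{\beta}' = W_{\gamma}$ is clopen, hence open; and if $\beta$ is a limit ordinal then $W_{\beta}' \in \m{CO}(X)_{|\beta|}$, and since $|\beta| \leq \beta < \alpha$ and $\alpha$ is a cardinality we have $|\beta| < \alpha$, so the hypothesis applied with the cardinality $|\beta|$ shows that the non-empty set $W_{\beta}'$ has an interior point. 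In every case $\m{int}(W_{\beta}')$ is a non-empty open subset of $X$, and since $V_{\beta}$ is dense, $\m{int}(W_{\beta}') \cap V_{\beta}$ is a non-empty open subset; using that $\m{CO}(X)$ is a basis, choose a non-empty $W_{\beta} \in \m{CO}(X)$ contained in it. Then $W_{\beta} \subset V_{\beta}$ and $W_{\beta} \subset \m{int}(W_{\beta}') \subset W_{\beta}' \subset W_{\gamma}$ for every $\gamma < \beta$, completing the recursion.

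Once the chain $(W_{\beta})_{\beta \in \alpha}$ is built, the $W_{\beta}$ form a decreasing chain of non-empty closed subsets of the compact space $X$, so $\bigcap_{\beta \in \alpha} W_{\beta} \neq \emptyset$; since $W_{\beta} \subset V_{\beta}$ for each $\beta$, we obtain $\emptyset \neq \bigcap_{\beta \in \alpha} W_{\beta} \subset \bigcap_{\beta \in \alpha} V_{\beta}$, and hence $\emptyset \notin D(X)_{\alpha}$. I expect the only genuinely delicate point to be the limit stage: the hypothesis on $\m{CO}(X)_{|\beta|}$ is used precisely to upgrade the a priori merely closed set $W_{\beta}'$ to one with non-empty interior, which is what keeps $\m{int}(W_{\beta}') \cap V_{\beta}$ non-empty and lets the recursion stay within clopen sets — so that compactness can be invoked both at limit stages and in the final step. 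The cardinal inequality $|\beta| < \alpha$ (using that $\alpha$ is a cardinal) also deserves an explicit word, but otherwise the argument is the standard zero-dimensional avatar of the Baire category theorem.
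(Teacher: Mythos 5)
Your proof is correct and is essentially the paper's argument: both build a decreasing transfinite chain of non-empty clopen sets $W_{\beta} \subset V_{\beta}$, using density of $V_{\beta}$ together with the clopen basis (Lemma \ref{zero dimensional}) at successor stages, the interior-point hypothesis on $\m{CO}(X)_{|\beta|}$ at limit stages, and compactness of $X$ to keep the intersections non-empty. The only difference is presentational: you run the transfinite recursion directly, whereas the paper packages the same construction as a Zorn's-lemma argument on the poset of partial chains and obtains the full-length chain by showing a maximal element must have domain $\alpha$.
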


\begin{proof}
If $\alpha < \aleph_0$, the assertion is trivial because $X \neq \emptyset$, and hence it suffices to consider the case $\alpha \geq \aleph_0$. Take an element $U \in D(X)_{\alpha}$ and a presentation $U = \bigcap_{\beta \in \alpha} V_{\beta}$ by a map $V \colon \alpha \to D(X) \colon \beta \mapsto V_{\beta}$. For an ordinal $\beta \in \alpha + 1$, denote by $R_{\beta} \subset \m{CO}(X)^{\beta}$ the set of order-preserving maps from $\beta$ to $\m{CO}(X)$ with respect to the inclusion of clopen subsets, and set
\begin{eqnarray*}
  R \coloneqq \bigcup_{\beta \in \alpha + 1} \set{V' \in R_{\beta}}{\emptyset \neq V'_{\gamma} \subset V_{\gamma}, {}^{\forall} \gamma \in \beta}.
\end{eqnarray*}
For elements $V',V'' \in R$, we write $V' \leq V''$ if the domain of $V'$ is contained in that of $V''$ and $V'$ is the restriction of $V''$. Then the binary relation $\leq$ is an order on $R$. Since $|\emptyset| < \alpha$ and $X \in D(X) \neq \emptyset$, one has $\emptyset \in \{ \emptyset \} = D(X)^{\emptyset} = R_{\emptyset} \subset R$. Take a totally ordered subset $L \subset R$. Then the union $\beta_L$ of the domains of maps in $L$ is an ordinal contained in $\alpha + 1$, and maps in $L$ have the unique order-preserving extension $V'(L) \colon \beta_L \to \m{CO}(X)$ with $V'(L)_{\gamma} \subset V_{\gamma}$ for any $\gamma \in \beta_L$ by the definition of $\leq$. Therefore $L$ has an upper bound. It follows that $R$ is a non-empty inductively ordered set. Take a maximal element $V' \in R$, and let $\beta \in \alpha + 1$ be the domain of $V'$. Since $X$ is compact and $\{ V'_{\gamma} \mid \gamma \in \beta \} \subset \m{CO}(X)$ is a decreasing family of non-empty closed subset, the intersection $\bigcap_{\gamma \in \beta} V'_{\gamma} \subset X$ is non-empty. In order to prove $\beta = \alpha$, assume $\beta \in \alpha$. For an ordinal $\gamma \in \beta$, set $V''_{\gamma} \coloneqq V'_{\gamma} \in \m{CO}(X)$. First suppose $\beta$ is a successor ordinal, let $\epsilon \in \beta$ be the predecessor ordinal of $\beta$. Since $V_{\beta} \in D(X)$ is a dense open subset of $X$, the intersection $V_{\beta} \cap V''_{\epsilon}$ contains an interior point $x \in X$, and hence there is a clopen neighbourhood $V''_{\beta} \in \m{CO}(X) \cap N_{X,x}$ of $x$ contained in $V_{\beta} \cap V''_{\epsilon}$ by Lemma \ref{zero dimensional}. One has obtained a map
\begin{eqnarray*}
  V'' \colon \beta + 1 & \to & \m{CO}(X) \\
  \gamma & \mapsto & V''_{\gamma}
\end{eqnarray*}
extending $V'$. It satisfies $V''_{\gamma} \subset V'_{\gamma}$ for any $\gamma \in \beta + 1$ and is order-preserving because $\epsilon$ is the greatest ordinal in $\beta$. Therefore one has $V'' \in R$, $V' \leq V''$, and $V'' \neq V'$. It contradicts the maximality of $V'$. Next suppose $\beta$ is a limit ordinal. Since $\alpha$ is a cardinality containing $\beta$, one has $\aleph_{\beta} \coloneqq |\beta| < \alpha$. The non-empty subset $\bigcap_{\gamma \in \beta} V''_{\gamma} = \bigcap_{\gamma \in \beta} V'_{\gamma} \subset X$ is contained in $\m{CO}(X)_{\aleph_{\beta}}$, and it contains an interior point $x \in X$ by the assumption in the assertion. Since $V'_{\beta} \in D(X)$ is a dense open subset of $X$, the intersection $V'_{\beta} \cap \bigcap_{\gamma \in \beta} V''_{\gamma}$ contains a neighbourhood of $x$, and it follows that there is a clopen neighbourhood $V''_{\beta} \in \m{CO}(X) \cap N_{X,x}$ of $x$ contained in $V'_{\beta} \cap \bigcap_{\gamma \in \beta} V''_{\gamma}$ by Lemma \ref{zero dimensional}. One has obtained a map
\begin{eqnarray*}
  V'' \colon \beta + 1 & \to & \m{CO}(X) \\
  \gamma & \mapsto & V''_{\gamma}
\end{eqnarray*}
extending $V'$. It satisfies $V''_{\gamma} \subset V'_{\gamma}$ for any $\gamma \in \beta + 1$ and is order-preserving by construction. Therefore one has $V'' \in R$, $V' \leq V''$, and $V'' \neq V'$. It contradicts the maximality of $V'$. Thus the ordinal $\beta$ is not contained in $\alpha$. Since $\beta \in \alpha + 1 = \alpha \cup \{ \alpha \}$, one obtains $\beta = \alpha$. It follows
\begin{eqnarray*}
  \emptyset \neq \bigcap_{\gamma \in \alpha} V'_{\gamma} \subset \bigcap_{\gamma \in \alpha} V_{\gamma} = U,
\end{eqnarray*}
and we conclude that $U$ is non-empty.
\end{proof}

\begin{proposition}
\label{category 2}
Suppose $X$ is a totally disconnected non-compact Hausdorff topological space countable at infinity. Then the family $D(\partial_{\ms{TD}} X)_{\aleph_1} \subset 2^{\partial_{\mt{TD}} X}$ does not contain $\emptyset$.
\end{proposition}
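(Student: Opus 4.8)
The plan is to obtain this as a direct application of Lemma \ref{category} to the space $\partial_{\ms{TD}} X$ with the cardinality $\alpha = \aleph_1$. First I would check that the hypotheses on the ambient space hold. By Proposition \ref{open immersion} the image $\iota_{\ms{TD},X}(X) \subset \m{TD}(X)$ is open, so if $\partial_{\ms{TD}} X$ were empty then $\iota_{\ms{TD},X}$ would be a homeomorphism onto the compact space $\m{TD}(X)$, contradicting the non-compactness of $X$; hence $\partial_{\ms{TD}} X$ is non-empty. Moreover $\partial_{\ms{TD}} X$ is a closed subset of the totally disconnected compact Hausdorff space $\m{TD}(X)$, so it is itself totally disconnected, compact, and Hausdorff. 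Thus every hypothesis of Lemma \ref{category} is in place except the interior-point condition, which is the content of the next step.

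Next I would verify that for every cardinality $\beta < \aleph_1$ and every non-empty $W \in \m{CO}(\partial_{\ms{TD}} X)_{\beta}$, the set $W$ has an interior point in $\partial_{\ms{TD}} X$. Such a $\beta$ is either finite or equal to $\aleph_0$. If $\beta$ is finite, then $W$ is a finite intersection of clopen subsets of $\partial_{\ms{TD}} X$, hence clopen, hence open, so every point of $W$ is interior. If $\beta = \aleph_0$, then $W$ is a countable intersection of clopen subsets of $\partial_{\ms{TD}} X$, i.e.\ $W \in \m{CO}_{\delta}(\partial_{\ms{TD}} X)$; since $X$ is totally disconnected Hausdorff and countable at infinity, Proposition \ref{zero set in the boundary} applies and guarantees that the non-empty $\m{CO}_{\delta}$-subset $W$ of $\partial_{\ms{TD}} X$ contains an interior point. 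This covers all $\beta < \aleph_1$.

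With both hypotheses checked, Lemma \ref{category} with $\alpha = \aleph_1$ yields that $D(\partial_{\ms{TD}} X)_{\aleph_1}$ does not contain $\emptyset$, which is exactly the assertion. The only mild bookkeeping needed is the identification $\m{CO}(\partial_{\ms{TD}} X)_{\aleph_0} = \m{CO}_{\delta}(\partial_{\ms{TD}} X)$ and the remark that the cardinalities strictly below $\aleph_1$ are precisely the finite ones together with $\aleph_0$; once these are noted the proof is just an assembly of Proposition \ref{open immersion}, Proposition \ref{zero set in the boundary}, and Lemma \ref{category}, so I do not expect any substantial obstacle here — the genuine work was carried out in those earlier results.
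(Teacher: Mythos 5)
Your proposal is correct and follows exactly the route the paper takes: the paper's proof is a one-line appeal to Lemma \ref{category} combined with Proposition \ref{zero set in the boundary}, plus the observation that $\partial_{\ms{TD}} X \neq \emptyset$ because $X$ is non-compact and $\iota_{\ms{TD},X}$ is a homeomorphism onto its image. Your write-up merely makes explicit the bookkeeping the paper leaves implicit (that the cardinalities below $\aleph_1$ are the finite ones and $\aleph_0$, and that $\m{CO}(\partial_{\ms{TD}} X)_{\aleph_0} = \m{CO}_{\delta}(\partial_{\ms{TD}} X)$), so there is nothing further to add.
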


\begin{proof}
It is straightforward by Lemma \ref{category} and Proposition \ref{zero set in the boundary}. The boundary $\partial_{\ms{TD}} X$ is not empty because $X$ is non-compact and $\iota_{\ms{TD},X} \colon X \to \m{TD}(X)$ is a homeomorphism onto the image.
\end{proof}

Now we verify the existence of a P-point in the totally disconnected boundary of a non-empty separable totally disconnected non-compact Hausdorff topological space countable at infinity such as $\N$ under the continuum hypothesis. In particular it contains the existence of a P-point in $\beta \N \backslash \N$ under the continuum hypothesis. This result for the totally disconnected boundary is quite analogous to that for the maximal boundary, and the proof here is similar with that for the maximal boundary. See \cite{Dal} 4.2.23.

\begin{theorem}
Suppose $X$ is a separable totally disconnected non-compact Hausdorff topological space countable at infinity. Then the totally disconnected boundary $\partial_{\ms{TD}} X$ has a P-point under the continuum hypothesis.
\end{theorem}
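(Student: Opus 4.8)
The plan is to run the classical transfinite tower construction of a P-point, now driven by the $p$-adic interior-point property of Proposition \ref{zero set in the boundary} and forced to terminate within length $\omega_1$ by the cardinality bound of Corollary \ref{continuous functions on a separable space} together with the continuum hypothesis. Fix a local field $k$; then the valuation of $k$ is non-trivial and Corollary \ref{continuous functions on a separable space} is available. Write $Y \coloneqq \partial_{\ms{TD}} X$. By Proposition \ref{open immersion}, $Y$ is a $\m{CO}_{\delta}$-subset, hence a closed subspace, of the totally disconnected compact Hausdorff space $\m{TD}(X)$, so $Y$ is itself a totally disconnected compact Hausdorff topological space; it is non-empty because $X$ is non-compact while $\iota_{\ms{TD},X}$ is a homeomorphism onto its image. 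Being compact Hausdorff and totally disconnected, $Y$ is locally compact, so $\m{CO}(Y)$ is a basis of $Y$ by Lemma \ref{zero dimensional}. By Lemma \ref{zero set} the map $f \mapsto f^{-1}(\{ 0 \})$ sends $\m{C}(Y,k)$ onto $\m{CO}_{\delta}(Y)$, whence $|\m{CO}_{\delta}(Y)| \leq |\m{C}(Y,k)| \leq 2^{\aleph_0}$ by Corollary \ref{continuous functions on a separable space}; under the continuum hypothesis this cardinality is at most $\aleph_1$, and I fix a surjection $\omega_1 \to \m{CO}_{\delta}(Y)$, $\alpha \mapsto F_{\alpha}$, allowing repetitions.

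Next I would construct by transfinite recursion on $\alpha < \omega_1$ a decreasing family $(V_{\alpha})_{\alpha < \omega_1}$ of non-empty clopen subsets of $Y$ such that $V_{\alpha + 1} \subset F_{\alpha}$ whenever $V_{\alpha} \cap F_{\alpha} \neq \emptyset$. Put $V_0 \coloneqq Y$. At a successor step, given a non-empty clopen $V_{\alpha}$: if $V_{\alpha} \cap F_{\alpha} \neq \emptyset$, then $V_{\alpha} \cap F_{\alpha}$ is a non-empty $\m{CO}_{\delta}$-subset of $Y$, so it has an interior point by Proposition \ref{zero set in the boundary}, and Lemma \ref{zero dimensional} supplies a non-empty clopen $V_{\alpha + 1} \subset V_{\alpha} \cap F_{\alpha}$; otherwise set $V_{\alpha + 1} \coloneqq V_{\alpha}$, so that $V_{\alpha + 1} \cap F_{\alpha} = \emptyset$. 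At a limit ordinal $\lambda < \omega_1$, the sets $\{ V_{\alpha} \mid \alpha < \lambda \}$ form a decreasing family of non-empty closed subsets of the compact space $Y$, so $\bigcap_{\alpha < \lambda} V_{\alpha}$ is non-empty by the finite intersection property, and it belongs to $\m{CO}_{\delta}(Y)$ because $\lambda$ is countable; Proposition \ref{zero set in the boundary} then gives it an interior point, around which Lemma \ref{zero dimensional} provides a non-empty clopen $V_{\lambda} \subset \bigcap_{\alpha < \lambda} V_{\alpha}$. After $\omega_1$ steps, the finite intersection property in the compact space $Y$ yields a point $x \in \bigcap_{\alpha < \omega_1} V_{\alpha}$.

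Then I would check that $x$ is a P-point of $Y$. Let $f \colon Y \to k$ be a bounded continuous function with $f(x) = 0$. By Lemma \ref{zero set} the zero locus $F \coloneqq f^{-1}(\{ 0 \})$ belongs to $\m{CO}_{\delta}(Y)$, so $F = F_{\alpha}$ for some $\alpha < \omega_1$. As $x \in V_{\alpha}$ and $x \in F_{\alpha}$, we have $V_{\alpha} \cap F_{\alpha} \neq \emptyset$, so the recursion chose $V_{\alpha + 1} \subset F_{\alpha}$; since $x \in V_{\alpha + 1}$ and $V_{\alpha + 1}$ is a clopen neighbourhood of $x$, the set $F = F_{\alpha}$ is a neighbourhood of $x$. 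As $f$ was arbitrary, Proposition \ref{P-point and zero set}, applied to the totally disconnected locally compact Hausdorff space $Y$ and the non-trivially valued field $k$, shows that $x$ is a P-point of $Y = \partial_{\ms{TD}} X$.

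The genuinely substantive ingredient --- that non-empty $\m{CO}_{\delta}$-subsets of the boundary have interior points --- has already been isolated as Proposition \ref{zero set in the boundary}, so the remaining work is essentially bookkeeping. The point to watch is that the recursion never stalls: this uses compactness of $Y$ (for non-emptiness of the intersections at the limit and terminal stages) together with the interior-point property (at the successor and limit stages), and it uses that the length $\omega_1$ of the recursion is long enough to list every member of $\m{CO}_{\delta}(Y)$, which is exactly where the continuum hypothesis and the bound $|\m{C}(\partial_{\ms{TD}} X,k)| \leq 2^{\aleph_0}$ are invoked. Beyond organising this transfinite recursion I anticipate no serious obstacle.
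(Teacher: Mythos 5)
Your proof is correct and is in substance the same argument as the paper's: both rest on Proposition \ref{zero set in the boundary} (non-empty $\m{CO}_{\delta}$-subsets of $\partial_{\ms{TD}}X$ have interior points), the bound $|\m{C}(\partial_{\ms{TD}}X,k)|\leq 2^{\aleph_0}$ from Corollary \ref{continuous functions on a separable space}, and the continuum hypothesis to cut the enumeration down to length $\omega_1$. The only difference is packaging: the paper routes the transfinite clopen-tower construction through a general Baire-category statement (Lemma \ref{category} and Proposition \ref{category 2}, applied to the dense open sets $U_f$), whereas you inline that same tower as an explicit recursion against an enumeration of $\m{CO}_{\delta}(\partial_{\ms{TD}}X)$.
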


\begin{proof}
By the continuum hypothesis, the cardinality of $\m{C}(\partial_{\ms{TD}} X,\Q_7)$ is at most $\aleph_1$ by Corollary \ref{continuous functions on a separable space}. For a $\Q_7$-valued continuous function $f \colon \m{C}(\partial_{\ms{TD}} X,\Q_7)$, denote by $U_f \subset 
\partial_{\ms{TD}} X$ the union of the complement $X \backslash f^{-1}(\{ 0 \})$ and the interior of $f^{-1}(\{ 0 \}) \subset \partial_{\ms{TD}} X$. In particular the complement $\partial_{\ms{TD}} X \backslash U_f \subset \partial_{\ms{TD}} X$ has no interior point because it is the boundary of the closed subset $f^{-1}(\{ 0 \}) \subset \partial_{\ms{TD}} X$, and hence $U_f$ is a dense open subset of $\partial_{\ms{TD}} X$. Therefore the intersection $F \coloneqq \bigcap_{f \in \ms{C}(\partial_{\mt{TD}} X,\Q_7)} U_f \subset \partial_{\ms{TD}} X$ is non-empty by Proposition \ref{category 2}, and take a point $x \in F$. Now for a $\Q_7$-valued continuous function $f \colon \partial_{\ms{TD}} X \to \Q_7$ with $f(x) = 0$, $x$ is contained in the interior of the zero locus $f^{-1}(\{ 0 \}) \subset \partial_{\ms{TD}} X$ by the definition of $F$, and hence $m_{\Q_7,x} = I_{\Q_7,x}$. Therefore we conclude that $x \in \partial_{\ms{TD}} X$ is a P-point by Proposition \ref{criterion for a P-point}.
\end{proof}

Since the natural numbers $\N$ is a separable totally disconnected non-compact Hausdorff topological space countable at infinity, and since the universal totally disconnected Hausdorff compactification of a discrete topological space coincides with the Stone-$\check{\m{C}}$ech compactification of it, the result contains the well-known fact for the existence of a P-point in the maximal boundary $\beta \N \backslash \N$.

\begin{corollary}
The maximal boundary $\partial_{\beta} \N \coloneqq \beta \N \backslash \N$ has a P-point under the continuum hypothesis.
\end{corollary}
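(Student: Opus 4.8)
The plan is to deduce the corollary directly from the preceding theorem; the only work is to check that $\N$ with the discrete topology belongs to the class of spaces treated there and that its universal totally disconnected Hausdorff compactification coincides with the Stone--\v{C}ech compactification.

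First I would verify that the discrete space $\N$ is a separable totally disconnected non-compact Hausdorff topological space countable at infinity. It is countable, hence separable, with $\N$ itself as a countable dense subset; it is discrete, hence Hausdorff, and every singleton is clopen, so $\m{CO}(\N)$ is a basis of $\N$ and $\N$ is totally disconnected by Lemma \ref{zero dimensional}; it is infinite and discrete, hence non-compact; and the increasing sequence $K_n \coloneqq \{ 0,1,\ldots,n \}$ consists of finite, hence compact, clopen subsets with $\bigcup_{n \in \N} K_n = \N$ and $K_n$ contained in the interior of $K_{n + 1}$ (every subset of $\N$ is open), so $\N$ is countable at infinity.

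Next I would identify $\m{TD}(\N)$ with $\beta \N$. The Stone--\v{C}ech compactification $\beta \N$ is a compact Hausdorff space in which the clopen subsets $\overline{A}$, for $A \subset \N$, form a basis; hence $\m{CO}(\beta \N)$ is a basis of $\beta \N$ and $\beta \N$ is totally disconnected by Lemma \ref{zero dimensional}. Thus $(\beta \N, \iota_{\beta,\N})$ is a totally disconnected compact Hausdorff space equipped with a continuous map from $\N$, and since every continuous map from $\N$ to a compact Hausdorff space extends uniquely over $\beta \N$, the pair $(\beta \N, \iota_{\beta,\N})$ satisfies the universal property characterising $(\m{TD}(\N), \iota_{\ms{TD},\N})$. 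By the uniqueness of the universal totally disconnected Hausdorff compactification up to unique homeomorphism over $\N$, there is a homeomorphism $\m{TD}(\N) \to \beta \N$ compatible with the structure maps, which restricts to a homeomorphism $\partial_{\ms{TD}} \N \to \partial_{\beta} \N$.

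Finally, applying the preceding theorem with $X = \N$ yields, under the continuum hypothesis, a P-point of $\partial_{\ms{TD}} \N$; transporting it along the homeomorphism $\partial_{\ms{TD}} \N \to \partial_{\beta} \N$ gives a P-point of $\partial_{\beta} \N$, since being a P-point is a purely topological property preserved by homeomorphisms. I do not expect a genuine obstacle here: the one point requiring care is the classical fact that $\beta \N$ is zero-dimensional, which one may either invoke directly or re-derive from the universal property of $\beta \N$ together with the fact that a product of two-point discrete spaces is totally disconnected and that $\beta \N$ embeds as a closed subspace of such a product.
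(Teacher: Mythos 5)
Your proposal is correct and follows the same route as the paper, which justifies the corollary by the one-sentence remark that $\N$ is a separable totally disconnected non-compact Hausdorff space countable at infinity and that $\m{TD}$ of a discrete space coincides with its Stone--\v{C}ech compactification; you simply fill in the routine verifications (the compact clopen exhaustion $K_n = \{0,\ldots,n\}$, the zero-dimensionality of $\beta\N$, and the universal-property argument) that the paper leaves implicit. No gaps.
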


Using the interpretation of the purely topological notion of a P-point to the purely algebraic notion of ideals by Corollary \ref{criterion for the existence of a P-point}, one obtains the existence of the maximal ideal of height $0$.

\begin{corollary}
\label{existence of a maximal minimal ideal}
Suppose $X$ is a separable totally disconnected non-compact Hausdorff topological space countable at infinity and the valuation of $k$ is not trivial. Then the $k$-algebra $\m{C}(\partial_{\ms{TD}} X,k)$ has a maximal ideal of height $0$ under the continuum hypothesis.
\end{corollary}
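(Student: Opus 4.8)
The plan is to obtain this statement immediately by combining the preceding Theorem, which furnishes a P-point of $\partial_{\ms{TD}} X$ under the continuum hypothesis, with Corollary \ref{criterion for the existence of a P-point}, which converts the purely topological existence of a P-point into the algebraic statement that $\m{C}(\partial_{\ms{TD}} X,k)$ possesses a maximal ideal of height $0$. So the proof is a short deduction from results already in hand rather than a new argument.

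First I would recall that $\partial_{\ms{TD}} X$ is itself a totally disconnected compact Hausdorff topological space; this was established in the corollary following Proposition \ref{open immersion}, using that $X$ is totally disconnected, Hausdorff and countable at infinity. Since $X$ is moreover separable and non-compact, the hypotheses of the preceding Theorem are met, and hence under the continuum hypothesis there is a point $x \in \partial_{\ms{TD}} X$ that is a P-point of $\partial_{\ms{TD}} X$. In particular $\partial_{\ms{TD}} X$ does have a P-point, so condition (i) of Corollary \ref{criterion for the existence of a P-point}, namely the non-existence of a P-point, fails for the space $\partial_{\ms{TD}} X$.

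Next, since the valuation of $k$ is not trivial and $\partial_{\ms{TD}} X$ is a totally disconnected compact Hausdorff topological space, Corollary \ref{criterion for the existence of a P-point} applies verbatim with $\partial_{\ms{TD}} X$ in place of $X$. As its conditions (i), (ii) and (iii) are equivalent, the failure of (i) forces the failure of (ii): the $k$-algebra $\m{C}(\partial_{\ms{TD}} X,k)$ does have a maximal ideal of height $0$. Concretely, the maximal ideal $m_{k,x} \subset \m{C}(\partial_{\ms{TD}} X,k)$ attached to the P-point $x$ is of height $0$ by Proposition \ref{criterion for a P-point}, which gives the conclusion directly.

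There is essentially no obstacle here beyond bookkeeping, but the one point deserving a word of care is that the preceding Theorem is proved over $\Q_7$, whereas the field $k$ here is an arbitrary complete valuation field with non-trivial valuation. This causes no difficulty because the conclusion of that Theorem is purely topological, the existence of a P-point of $\partial_{\ms{TD}} X$, and hence is independent of the auxiliary choice of $\Q_7$ made in its proof; it may therefore be fed directly into the $k$-version of the criterion furnished by Proposition \ref{criterion for a P-point} and Corollary \ref{criterion for the existence of a P-point}, both of which are valid for every complete valuation field with non-trivial valuation.
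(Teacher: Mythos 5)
Your proposal is correct and matches the paper's intended argument exactly: the paper derives this corollary directly from the preceding Theorem (existence of a P-point of $\partial_{\ms{TD}} X$ under the continuum hypothesis) together with Corollary \ref{criterion for the existence of a P-point} applied to the totally disconnected compact Hausdorff space $\partial_{\ms{TD}} X$. Your remark that the choice of $\Q_7$ in the Theorem's proof is immaterial because its conclusion is purely topological is a worthwhile clarification, but does not change the route.
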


We have verified that the existence of a maximal ideal of height $0$ for a specific ring under the continuum hypothesis. It yields the existence of such an ideal is unprovable under the axiom of $\m{ZFC}$, provided the axiom of $\m{ZFC}$ is consistent. Conversely using Shelah's theory on a proper forcing, \cite{She} VI, it is easily seen that the existence of such an ideal is not provable either under the axiom of $\m{ZFC}$, provided the axiom of $\m{ZFC}$ is consistent.

\begin{theorem}
\label{main theorem}
If the axiom of $\m{ZFC}$ is consistent, it is independent of the axiom of $\m{ZFC}$ that the $k$-algebra $\m{C}(\partial_{\ms{TD}} X,k)$ of $k$-valued continuous functions on the totally disconnected boundary $\partial_{\ms{TD}} X$ has a maximal ideal of height $0$ for any separated totally disconnected non-compact Hausdorff topological space $X$ countable at infinity and any local field $k$.
\end{theorem}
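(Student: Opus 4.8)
The plan is to deduce the independence from two relative consistency facts, both conditional on the consistency of $\m{ZFC}$: first that $\m{ZFC}$ does not refute the displayed hypothesis, and second that $\m{ZFC}$ does not prove it.

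For the first half I would pass to a model of $\m{ZFC}$ in which the continuum hypothesis holds, for instance the constructible universe, and invoke Corollary \ref{existence of a maximal minimal ideal}. A local field is a complete discrete valuation field, so its valuation is non-trivial, and the hypotheses of the corollary are met for every separable totally disconnected non-compact Hausdorff topological space $X$ countable at infinity and every local field $k$; hence in this model $\m{C}(\partial_{\ms{TD}} X, k)$ has a maximal ideal of height $0$ for all such $X$ and $k$. Thus the displayed hypothesis is true in this model, so $\m{ZFC}$ cannot refute it.

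For the second half I would pass instead to a model of $\m{ZFC}$ in which $\beta \N \backslash \N$ has no P-point; such a model exists by Shelah's theory of proper forcing, \cite{She} VI, under the assumption that $\m{ZFC}$ is consistent. In that model I would test the hypothesis against the single pair $X \coloneqq \N$, equipped with the discrete topology, and $k$ an arbitrary local field. The space $\N$ is a separable totally disconnected non-compact Hausdorff topological space countable at infinity, a compact clopen exhaustion being $\{ 0, \ldots, n \}$ for $n \in \N$, and since the universal totally disconnected Hausdorff compactification of a discrete topological space is its Stone-$\check{\m{C}}$ech compactification, one has $\m{TD}(\N) = \beta \N$ and therefore $\partial_{\ms{TD}} \N = \beta \N \backslash \N$. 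This is a totally disconnected compact Hausdorff topological space which, by the choice of the model, has no P-point, so Corollary \ref{criterion for the existence of a P-point}, applied to it with the non-trivially valued field $k$, shows that $\m{C}(\beta \N \backslash \N, k)$ has no maximal ideal of height $0$. Hence the pair $(\N, k)$ falsifies the displayed universally quantified hypothesis in this model, so $\m{ZFC}$ cannot prove it.

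Combining the two halves yields the asserted independence. The only non-elementary ingredient is Shelah's consistency result on the absence of a P-point in $\beta \N \backslash \N$, which I would simply cite and which is the substantive obstacle; everything else — checking that $\N$ and that $\beta \N \backslash \N$ satisfy the hypotheses of Corollaries \ref{existence of a maximal minimal ideal} and \ref{criterion for the existence of a P-point}, and recalling the identification $\m{TD}(\N) = \beta \N$ from the earlier sections — is routine bookkeeping, so I anticipate no further difficulty.
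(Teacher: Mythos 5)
Your proposal is correct and follows essentially the same route as the paper: one direction via Corollary \ref{existence of a maximal minimal ideal} in a model of $\m{ZFC}+\m{CH}$, the other via Shelah's model with no P-point in $\beta\N\backslash\N$, the identification $\partial_{\ms{TD}}\N \cong \beta\N\backslash\N$, and Corollary \ref{criterion for the existence of a P-point}. The only cosmetic difference is that you name the constructible universe where the paper simply cites the consistency of $\m{CH}$, and you allow an arbitrary local field where the paper instantiates $k = \Q_{11}$; neither affects the argument.
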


\begin{proof}
Denote by $\m{CH}$ the continuum hypothesis, and by $\m{MMI}$ the existence of a maximal ideal of height $0$ in the $k$-algebra $\m{C}(\partial_{\ms{TD}} X,k)$ for any separated totally disconnected non-compact Hausdorff topological space $X$ countable at infinity and any local field $k$. We simply write $\m{ZFC}$ instead of the axiom of $\m{ZFC}$ for short. By Corollary \ref{existence of a maximal minimal ideal}, one has
\begin{eqnarray*}
  \m{ZFC}, \m{CH} \vdash \m{MMI}.
\end{eqnarray*}
Since $\m{CH}$ is independent of and consistent with $\m{ZFC}$ provided the consistency of $\m{ZFC}$, one has
\begin{eqnarray*}
  \m{ZFC} \vdash \hspace{-.75em}/ \ \neg \m{MMI}.
\end{eqnarray*}
On the other hand, there is a model of $\m{ZFC}$ where $\partial_{\ms{TD}} \N = \m{TD}(\N) \backslash \N \cong \beta \N \backslash \N$ has no P-point by \cite{She} VI.4.8. In such a model, there is no maximal ideal of height $0$ in the $\Q_{11}$-algebra $\m{C}(\partial_{\ms{TD}} \N,\Q_{11})$ by Corollary \ref{criterion for the existence of a P-point}. It follows
\begin{eqnarray*}
  \m{ZFC} \models \hspace{-.90em}/ \ \m{MMI}
\end{eqnarray*}
and hence
\begin{eqnarray*}
  \m{ZFC} \vdash \hspace{-.75em}/ \ \m{MMI}.
\end{eqnarray*}
Thus $\m{MMI}$ is independent of $\m{ZFC}$.
\end{proof}

\vspace{0.4in}
\addcontentsline{toc}{section}{Acknowledgements}
\noindent {\Large \bf Acknowledgements}
\vspace{0.1in}

I am deeply grateful to Professor T.\ Tsuji for his gracious teaching. He helps me to study various themes in mathematics. I would like to appreciate daily discussions with my great friends. I am thankful to my family for their boundless affection. I am extremely indebted to them for all of my success.

\addcontentsline{toc}{section}{References}

\end{document}